\newtheorem{theorem}[equation]{Theorem}
\newtheorem{lemma}[equation]{Lemma}
\newtheorem{corollary}[equation]{Corollary}
\newtheorem{proposition}[equation]{Proposition}
\newtheorem{conjecture}[equation]{Conjecture}
\newtheorem{example}{Example}
\numberwithin{equation}{section}
\begin{document}

\title[On logarithmic solutions of $A$-hypergeometric systems]{On logarithmic solutions of \\ $A$-hypergeometric systems} 
\author{Alan Adolphson}
\address{Department of Mathematics\\
Oklahoma State University\\
Stillwater, Oklahoma 74078}
\email{adolphs@math.okstate.edu}
\author{Steven Sperber}
\address{School of Mathematics\\
University of Minnesota\\
Minneapolis, Minnesota 55455}
\email{sperber@math.umn.edu}
\date{\today}
\keywords{}
\subjclass{}
\begin{abstract}
For an $A$-hypergeometric system with parameter $\beta$, a vector $v$ with minimal negative support satisfying $Av=\beta$ gives rise to a logarithm-free series solution.  We find conditions on $v$ analogous to `minimal negative support' that guarantee the existence of logarithmic solutions of the system and we give explicit formulas for those solutions.  Although we do not study in general the question of when these logarithmic solutions lie in a Nilsson ring, we do examine the $A$-hypergeometric systems corresponding to the Picard-Fuchs equations of certain families of complete intersections and we state a conjecture regarding the integrality of the associated mirror maps.
\end{abstract}
\maketitle

\section{Introduction}

Let $A=\{{\bf a}_1,\dots,{\bf a}_N\}\subseteq{\mathbb Z}^n$ and let $L\subseteq{\mathbb Z}^N$ be the lattice of relations on $A$:
\[ L = \bigg\{ l=(l_1,\dots,l_N)\in{\mathbb Z}^N\:\bigg|\: \sum_{i=1}^N l_i{\bf a}_i = {\bf 0} \bigg\}. \]
Let $\beta = (\beta_1,\dots,\beta_n)\in{\mathbb C}^n$.  The {\it $A$-hypergeometric system with parameter $\beta$\/} is the system of partial differential operators in $\lambda_1,\dots,\lambda_N$ consisting of the {\it box operators\/}
\begin{equation}
\Box_l = \prod_{l_i>0}\bigg(\frac{\partial}{\partial\lambda_i}\bigg)^{l_i} - \prod_{l_i<0}\bigg(\frac{\partial}{\partial \lambda_i}\bigg)^{-l_i}\quad\text{for $l\in L$}
\end{equation}
and the {\it Euler\/} or {\it homogeneity operators}
\begin{equation}
Z_i = \sum_{j=1}^N a_{ij}\lambda_j\frac{\partial}{\partial \lambda_j} -\beta_i \quad\text{for $i=1,\dots,n$,}
\end{equation}
where ${\bf a}_j = (a_{1j},\dots,a_{nj})$.  

To simplify notation we define for $z\in{\mathbb C}$ and $k\in {\mathbb Z}$, $k<-z$ if $z\in{\mathbb Z}_{<0}$,
\[ [z]_k = \begin{cases} 1 & \text{if $k=0$,} \\ \frac{1}{(z+1)(z+2)\cdots(z+k)} & \text{if $k>0$,} \\
z(z-1)\cdots(z+k+1) & \text{if $k<0$.} \end{cases} \]

For $z=(z_1,\dots,z_N)\in{\mathbb C}^N$ and $k=(k_1,\dots,k_N)\in{\mathbb Z}^N$ we define
\[ [z]_k = \prod_{i=1}^N [z_i]_{k_i}. \]
The {\it negative support\/} of $z$ is the set
\[ {\rm nsupp}(z) = \{i\in \{1,\dots,N\}\mid \text{$z_i$ is a negative integer}\}. \]

Let $v=(v_1,\dots,v_N)\in{\mathbb C}^N$ satisfy $\sum_{i=1}^N v_i{\bf a}_i = \beta$.  One says that $v$ has {\it minimal negative support\/} if there is no $l\in L$ for which ${\rm nsupp}(v+l)$ is a proper subset of ${\rm nsupp}(v)$.  Let
\[ L_v = \{l\in L\mid {\rm nsupp}(v+l) = {\rm nsupp}(v)\} \]
and let
\begin{equation}
F(\lambda) = \sum_{l\in L_v} [v]_l\lambda^{v+l}.
\end{equation}
By \cite[Proposition 3.4.13]{SST} (see also Section 3 below), the series $F(\lambda)$ is a solution of the $A$-hypergeometric system (1.1), (1.2) if and only if $v$ has minimal negative support.

Let $G(\lambda) = \sum_{l\in L} b_l\lambda^{v+l}$, where $b_l\in{\mathbb C}$.  Note that any such $G(\lambda)$ satisfies the Euler operators (1.2).  We call $\{l\in L\mid b_l\neq 0\}$ the {\it support\/} of $G(\lambda)$.  We say that $F(\lambda)\log\lambda_i+G(\lambda)$ is a {\it quasisolution\/} if is satisfies the box operators (1.1).  We call a set $\{F(\lambda)\log\lambda_i + G_i(\lambda)\}_{i=1}^N$ a {\it complete set of quasisolutions\/} if each element is a quasisolution.

\begin{proposition}
Suppose $\{F(\lambda)\log\lambda_i + G_i(\lambda)\}_{i=1}^N$ is a complete set of quasisolutions and let $l=(l_1,\dots,l_N)\in L$.  Then
\begin{equation}
\sum_{i=1}^N l_i\big(F(\lambda)\log\lambda_i + G_i(\lambda)\big) = F(\lambda)\log\lambda^l + \sum_{i=1}^N l_iG_i(\lambda)
\end{equation}
is a solution of the $A$-hypergeometric system $(1.1)$, $(1.2)$.
\end{proposition}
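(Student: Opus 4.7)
The equality (1.4) is purely formal: $\sum_{i=1}^N l_i\log\lambda_i=\log\prod_{i=1}^N\lambda_i^{l_i}=\log\lambda^l$, so the two expressions agree.  The real content is that the right-hand side is annihilated by every box operator and every Euler operator.  My plan is to handle these two families of operators separately.

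For the box operators the argument is essentially trivial.  By the definition of ``complete set of quasisolutions,'' each individual $F(\lambda)\log\lambda_i+G_i(\lambda)$ is annihilated by every $\Box_{l'}$ with $l'\in L$.  Since $\Box_{l'}$ is a ${\mathbb C}$-linear differential operator, the ${\mathbb C}$-linear combination $\sum_{i=1}^N l_i\bigl(F(\lambda)\log\lambda_i+G_i(\lambda)\bigr)$ is also annihilated by $\Box_{l'}$.  No use of the hypothesis $l\in L$ is needed here.

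The Euler operators are where the condition $l\in L$ enters, and this will be the main (though still short) step.  First observe that $F(\lambda)=\sum_{l'\in L_v}[v]_{l'}\lambda^{v+l'}$ satisfies $Z_j F=0$ for every $j$, because $A(v+l')=Av+Al'=\beta$ for each $l'\in L_v\subseteq L$; similarly each $G_i(\lambda)=\sum b_{l'}\lambda^{v+l'}$ is annihilated by every $Z_j$.  However $F(\lambda)\log\lambda_i$ is \emph{not} individually annihilated by $Z_j$: a direct computation using $\lambda_k\partial_k(\log\lambda_i)=\delta_{ik}$ gives
\[ Z_j\bigl(F(\lambda)\log\lambda_i\bigr)=(Z_jF)\log\lambda_i+a_{ji}F(\lambda)=a_{ji}F(\lambda). \]
Summing with weights $l_i$ and using that each $Z_j G_i$ vanishes,
\[ Z_j\Bigl(\sum_{i=1}^N l_i\bigl(F(\lambda)\log\lambda_i+G_i(\lambda)\bigr)\Bigr)=\Bigl(\sum_{i=1}^N a_{ji}l_i\Bigr)F(\lambda)=0, \]
the last equality being precisely the condition $Al=0$, i.e., $l\in L$.

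The only place where one might slip up is in keeping track of the fact that $F\log\lambda_i$ individually fails the Euler equations; the whole point of the hypothesis $l\in L$ is to cancel the obstructions $a_{ji}F$ coming from differentiating $\log\lambda_i$.  Once that cancellation is exhibited, the proof is complete.
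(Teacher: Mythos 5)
Your argument is correct and follows the same route as the paper: box operators are handled by linearity of the operators applied to a linear combination of quasisolutions, and the Euler operators are handled by observing that the extra term $a_{ji}F(\lambda)$ produced by differentiating $\log\lambda_i$ cancels upon summing against $l_i$ precisely because $Al=0$. The only difference is cosmetic—you carry out the Euler-operator computation explicitly, while the paper packages it as the ``elementary fact'' that $\lambda^c\log\lambda^l$ satisfies the Euler operators whenever $\lambda^c$ does and $l\in L$.
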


\begin{proof}
The left-hand side of (1.5) is a linear combination of solutions of the box operators, hence also satisfies the box operators.  The monomials in $F(\lambda)$ and the $G_i(\lambda)$ satisfy the Euler operators.  The fact that the right-hand side of~(1.5) also satisfies the Euler operators then follows from the following elementary fact: if a monomial $\lambda^c$ satisfies the Euler operators and $l\in L$, then $\lambda^c\log\lambda^l$ also satisfies the Euler operators.
\end{proof}

The purpose of this article is to give conditions on the vector $v$ that guarantee the existence of a complete set of quasisolutions.  For $v=(v_1,\dots,v_N)\in{\mathbb C}^N$ and $i\in\{1,\dots,N\}$, define the {\it $\hat{\imath}$-negative support of $v$\/} to be
\[ \text{$\hat{\imath}$-${\rm nsupp}(v)$} = \{j\in\{1,\dots,\hat{\imath},\dots,N\} \mid \text{$v_j$ is a negative integer}\}, \]
where the symbol `$\hat{\imath}$' indicates that the element $i$ has been deleted from the set $\{1,\dots,N\}$.  We say that $v$ has {\it minimal $\hat{\imath}$-negative support\/} if $\hat{\imath}$-${\rm nsupp}(v+l)$ is not a proper subset of $\hat{\imath}$-${\rm nsupp}(v)$ for any $l\in L$.  Let
\[ L_{v,\hat\imath} = \{ l\in L\mid \text{$\hat{\imath}$-${\rm nsupp}(v+l)=\hat{\imath}$-${\rm nsupp}(v)$}\}. \]
Note that $L_v\subseteq L_{v,\hat{\imath}}$.  Our main result is the following theorem.

\begin{theorem}
Suppose that $v$ has minimal negative support and minimal $\hat{\imath}$-negative support.  Then there exists a quasisolution $F(\lambda)\log\lambda_i + G(\lambda)$ with the support of~$G(\lambda)$ contained in $L_{v,\hat{\imath}}$.
\end{theorem}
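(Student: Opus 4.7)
The plan is to deform the Frobenius-type series in a single parameter $\epsilon$ and extract the quasisolution from its Taylor expansion at $\epsilon=0$. Set
\[ F_\epsilon(\lambda) = \sum_{l\in L_{v,\hat{\imath}}} [v+\epsilon e_i]_l \, \lambda^{v+\epsilon e_i+l}, \]
with $e_i$ the $i$-th standard basis vector. For generic $\epsilon$ the vector $v+\epsilon e_i$ has minimal negative support: since $v_i+\epsilon$ is not a negative integer, ${\rm nsupp}(v+\epsilon e_i+l) = \hat{\imath}$-${\rm nsupp}(v+l)$ for every $l\in L$, so the required minimality for $v+\epsilon e_i$ reduces to the minimal $\hat{\imath}$-negative support of $v$, which is the hypothesis. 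One also has $L_{v+\epsilon e_i}=L_{v,\hat{\imath}}$. Thus by \cite[Proposition 3.4.13]{SST}, $F_\epsilon$ is annihilated by every box operator $\Box_m$ for such $\epsilon$.

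Next I would show $F_\epsilon$ extends holomorphically to a neighborhood of $\epsilon=0$. Factor $[v+\epsilon e_i]_l = [v_i+\epsilon]_{l_i}\prod_{j\neq i}[v_j]_{l_j}$; for $j\neq i$ with $v_j\in{\mathbb Z}_{<0}$, membership of $l$ in $L_{v,\hat{\imath}}$ forces $v_j+l_j\in{\mathbb Z}_{<0}$, so each $[v_j]_{l_j}$ is well-defined. A pole of $[v_i+\epsilon]_{l_i}$ at $\epsilon=0$ would require $v_i\in{\mathbb Z}_{<0}$ and $l_i\geq -v_i$; but then ${\rm nsupp}(v+l) = \hat{\imath}$-${\rm nsupp}(v) \subsetneq {\rm nsupp}(v)$, contradicting the minimal negative support of $v$. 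Hence $F_\epsilon$ is holomorphic at $\epsilon=0$, and writing $F_\epsilon = F_0+\epsilon F_1+O(\epsilon^2)$, each Taylor coefficient is annihilated by every $\Box_m$. The identification $F_0=F$ falls into two cases: terms from $L_{v,\hat{\imath}}\setminus L_v$ either vanish identically because $v_i+l_i\in{\mathbb Z}_{<0}$ forces $[v_i]_{l_i}=0$ when $v_i\notin{\mathbb Z}_{<0}$, or do not occur at all since the same holomorphy argument actually shows $L_{v,\hat{\imath}}=L_v$ when $v_i\in{\mathbb Z}_{<0}$.

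Finally, using $\lambda^{v+\epsilon e_i+l} = \lambda^{v+l}(1+\epsilon\log\lambda_i+O(\epsilon^2))$ together with the Taylor expansion of $[v_i+\epsilon]_{l_i}$ at $\epsilon=0$, the coefficient of $\epsilon$ in $F_\epsilon$ is
\[ F_1 = F(\lambda)\log\lambda_i + G(\lambda), \qquad G(\lambda) = \sum_{l\in L_{v,\hat{\imath}}}\frac{d[v_i+\epsilon]_{l_i}}{d\epsilon}\bigg|_{\epsilon=0}\prod_{j\neq i}[v_j]_{l_j}\,\lambda^{v+l}, \]
which is a quasisolution with support contained in $L_{v,\hat{\imath}}$. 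The main technical hurdle is the combined holomorphy and identification step of the second paragraph, where both minimality hypotheses on $v$ are used in an essential way: minimal $\hat{\imath}$-negative support provides a genuine one-parameter family of solutions satisfying the box operators, while minimal negative support ensures that family has no pole at the origin and that its constant term coincides with $F$.
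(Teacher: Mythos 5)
Your proposal is correct, and it takes a genuinely different route from the paper. The paper proceeds constructively: in Section 2 it builds generating series $\Phi_{v_m}(\lambda_m,T)$ from one-variable families $f_z^{(k)}(t)$ satisfying $\frac{d}{dt}f_z^{(k)}=f_z^{(k-1)}$, proves directly (Proposition 2.11) that the coefficient series $\Phi_{v,i,u}(\lambda)$ are annihilated by all box operators, and then in Section 4 uses the two minimality hypotheses to show that the only $f_{v_m}^{(l_m)}$ that occur are the ones with at most one power of $\log$, which yields the explicit formula for $G_i$ in Theorem 4.11. You instead take the classical Frobenius route: deform the exponent by $\epsilon$ in the distinguished slot, observe that $v+\epsilon e_i$ has minimal negative support for $\epsilon$ avoiding a countable set (this is exactly where minimal $\hat\imath$-negative support is used), invoke the logarithm-free solution result once, show the resulting family $F_\epsilon$ has no pole at $\epsilon=0$ (where minimal negative support is used), and then read off the quasisolution as the $\epsilon^1$ Taylor coefficient. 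What your approach buys is economy and conceptual clarity, since it derives the logarithmic case directly from the known logarithm-free case; what the paper's approach buys is an explicit closed form for $G_i$ in terms of $m_{k,1}$ and $s_{k,k-1}$ (your $G$ is the same series, since $\frac{d}{d\epsilon}[v_i+\epsilon]_{l_i}\big|_{\epsilon=0}$ equals $-[v_i]_{l_i}m_{l_i,1}(v_i)$ for $l_i>0$ and $s_{-l_i,-l_i-1}(v_i)$ for $l_i<0$), and a uniform machine (the $\Phi$-construction) that carries over verbatim to the higher-order quasisolutions of Section 5, where your method would need to pass to second and higher Taylor coefficients. One small imprecision: to conclude ${\rm nsupp}(v+\epsilon e_i+l)=\hat\imath\text{-}{\rm nsupp}(v+l)$ for all $l\in L$ you need $v_i+\epsilon\notin{\mathbb Z}$, not merely $v_i+\epsilon\notin{\mathbb Z}_{<0}$, since a positive-integer value of $v_i+\epsilon$ can still produce $v_i+\epsilon+l_i\in{\mathbb Z}_{<0}$ after adding $l_i<0$; this is still a generic condition on $\epsilon$ and does not affect the argument.
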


Using Proposition 1.4 we then have the following corollary.

\begin{corollary}
If $v$ has minimal negative support and minimal $\hat{\imath}$-negative support for all $i\in\{1,\dots,N\}$, then for each $l\in L$ there exists a solution $F(\lambda)\log\lambda^l + G_l(\lambda)$ of the $A$-hypergeometric system $(1.1)$, $(1.2)$ with the support of $G_l(\lambda)$ contained in~$\bigcup_{i=1}^N L_{v,\hat{\imath}}$.
\end{corollary}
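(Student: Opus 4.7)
The plan is to deduce the corollary directly by combining Theorem 1.6 with Proposition 1.4. First I would invoke Theorem 1.6 once for each index $i\in\{1,\dots,N\}$: since $v$ is assumed to have minimal negative support and minimal $\hat{\imath}$-negative support for every $i$, the theorem produces, for each $i$, a quasisolution of the form $F(\lambda)\log\lambda_i+G_i(\lambda)$ with the support of $G_i(\lambda)$ contained in $L_{v,\hat{\imath}}$. By the definition given just before Proposition 1.4, the resulting collection $\{F(\lambda)\log\lambda_i+G_i(\lambda)\}_{i=1}^N$ is a complete set of quasisolutions.

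Next I would fix $l=(l_1,\dots,l_N)\in L$ and apply Proposition 1.4 to this complete set. The conclusion of that proposition tells us that
\[ F(\lambda)\log\lambda^l+\sum_{i=1}^N l_iG_i(\lambda) \]
is a solution of the $A$-hypergeometric system (1.1), (1.2). Setting $G_l(\lambda):=\sum_{i=1}^N l_iG_i(\lambda)$ gives the desired solution $F(\lambda)\log\lambda^l+G_l(\lambda)$.

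Finally I would verify the support condition: since each $G_i(\lambda)$ has support inside $L_{v,\hat{\imath}}$, the finite linear combination $G_l(\lambda)=\sum_i l_iG_i(\lambda)$ has support inside $\bigcup_{i=1}^N L_{v,\hat{\imath}}$, as required. (In the degenerate case $l=0$, the asserted solution is just $G_0(\lambda)$, which is the zero function and trivially a solution; alternatively one can take $G_0=F$, whose support $L_v$ is contained in every $L_{v,\hat{\imath}}$.)

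There is essentially no obstacle here once Theorem 1.6 is in hand: the corollary is just the observation that Theorem 1.6 supplies the hypothesis of Proposition 1.4, and Proposition 1.4 then packages the quasisolutions into genuine solutions indexed by $l\in L$, with the support bookkeeping carried along mechanically.
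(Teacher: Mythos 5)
Your proof is correct and matches the paper's own (implicit) derivation exactly: the paper simply states that Corollary 1.7 follows by ``using Proposition 1.4'' after Theorem 1.6, and you have spelled out precisely this reasoning, including the support bookkeeping via $G_l=\sum_i l_i G_i$.
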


In Section 4 we prove Theorem 1.6 by giving an explicit formula for the quasisolution (see Theorem 4.11 below).  Similar results hold for solutions involving higher powers of logarithms.  We treat the quadratic case in Section 5, which provides an outline for the general case.  We do not consider in general the question of when the logarithmic solutions lie in a Nilsson ring; however, in Section 6 we examine the case of $A$-hypergeometric systems corresponding to the Picard-Fuchs equations of certain families of complete intersections.

\section{Solving the box operators}

We present a method for generating solutions of the box operators (1.1).  In Section 3, we explain how to recover the logarithm-free solution $F(\lambda)$ using this method and in Section 4 we explain how to use it to construct quasisolutions.

Our scheme is based on finding sequences $f_z^{(k)}(t)$ of functions of one variable $t$, parametrized by $z\in{\mathbb C}$ and indexed by $k\in{\mathbb Z}$, satisfying for all $k$
\begin{equation}
\frac{d}{dt}\big(f_z^{(k)}(t)\big) = f_z^{(k-1)}(t).
\end{equation}
To do this, simply choose $f_z^{(0)}(t)$ and successively differentiate it to find the $f_z^{(k)}(t)$ for $k<0$ and successively integrate it to find the $f_z^{(k)}(t)$ for $k>0$.

For the applications in this article, we shall take
\begin{equation}
f_z^{(0)}(t) = t^z\log^m t\quad\text{($m$ a nonnegative integer).}
\end{equation}
We first describe the $f_z^{(k)}(t)$ for $k<0$.  For a positive integer $i$, let $S_{i,j}(x_1,\dots,x_i)$ be the $j$-th elementary function in $i$ variables:
\[ S_{i,0} = 1,\quad S_{i,1} = x_1+\cdots+x_i,\quad \dots, \quad S_{i,i} = x_1\cdots x_i. \]
Set $s_{i,j}(z) = S_{i,j}(z,z-1,\dots,z-i+1)$.  Differentiation of (2.2) gives
\begin{multline}
f_z^{(k)}(t) = \\
t^{z+k}\sum_{i=0}^{\min\{-k,m\}} s_{-k,-k-i}(z)m(m-1)\cdots(m-i+1)\log^{m-i}t\quad\text{for $k<0$.}
\end{multline}
The functions $f_z^{(k)}(t)$ for $k>0$ depend on whether or not $z$ is a negative integer.  For a positive integer $k$, let $M_{k,i}(x_1,\dots,x_k)$ be the sum of all monomials of degree~$i$ in $k$ variables:
\[ M_{k,i}(x_1,\dots,x_k) = \sum_{j_1+\cdots+j_k = i} x_1^{j_1}\cdots x_k^{j_k}. \]
Set $m_{k,i}(z) = M_{k,i}\big((z+1)^{-1},\dots,(z+k)^{-1}\big)$.  If $z\not\in{\mathbb Z}_{<0}$, then (always choosing the constant of integration to be $0$ so that $f_z^{(k)}(t)$ is homogeneous of degree $z+k$ in $t$)
\begin{multline}
f_z^{(k)}(t) = \\
[z]_kt^{z+k}\sum_{i=0}^m (-1)^i m(m-1)\cdots(m-i+1)m_{k,i}(z)\log^{m-i}t\quad\text{for $k>0$.}
\end{multline}
If $z\in{\mathbb Z}_{<0}$, then (2.4) is valid for $0<k<-z$, while for $k\geq -z$ we have
\begin{equation}
f_z^{(k)}(t) = t^{z+k}\cdot\text{(polynomial of degree $m+1$ in $\log t$).}
\end{equation}
In this last case we do not need a precise formula as our hypotheses will guarantee that these $f_z^{(k)}(t)$ do not appear in our formulas for solutions of $A$-hypergeometric systems.

Let $v=(v_1,\dots,v_N)\in{\mathbb C}^N$ satisfy $\sum_{i=1}^N v_i{\bf a}_i = \beta$ and for each $i=1,\dots,N$ choose a family of functions $\{f_{v_i}^{(k)}(t)\}_{i\in{\mathbb Z}}$ satisfying (2.1) and replace $t$ by $\lambda_i$.  The functions $f_{v_i}^{(k)}(\lambda_i)$ then satisfy
\begin{equation}
\frac{\partial}{\partial\lambda_j}\big( f_{v_i}^{(k)}(\lambda_i)\big) = \begin{cases} f_{v_i}^{(k-1)}(\lambda_i) & \text{if $i=j$,} \\ 0 & \text{if $i\neq j$} \end{cases}
\end{equation}
for $i,j=1,\dots,N$ and all $k\in{\mathbb Z}$.  Let $T_1,\dots,T_n$ be indeterminates.  If ${\bf b}=(b_1,\dots,b_n)\in{\mathbb C}^n$, we write $T^{\bf b} = T_1^{b_1}\cdots T_n^{b_n}$.  We form the generating series
\begin{equation}
\Phi_{v_i}(\lambda_i,T) = \sum_{k\in{\mathbb Z}} f_{v_i}^{(k)}(\lambda_i)T^{(v_i+k){\bf a}_i}.
\end{equation}
Equation (2.6) is equivalent to
\begin{equation}
\frac{\partial}{\partial\lambda_j}\big(\Phi_{v_i}(\lambda_i,T)\big) = \begin{cases} T^{{\bf a}_i}\Phi_{v_i}(\lambda_i,T) & \text{if $i=j$,} \\ 0 & \text{if $i\neq j$.} \end{cases}
\end{equation}

Let $\Phi_v(\lambda,T)$ be the product of these generating series:
\begin{equation}
\Phi_v(\lambda,T) = \prod_{i=1}^N \Phi_{v_i}(\lambda_i,T) = \sum_{u\in{\mathbb Z}A} \Phi_{v,u}(\lambda) T^{\beta+u},
\end{equation}
where 
\begin{equation}
\Phi_{v,u}(\lambda) = \sum_{\sum k_i{\bf a}_i = u} \prod_{i=1}^N f_{v_i}^{(k_i)}(\lambda_i).
\end{equation}
The sum on the right-hand side of (2.10) may be infinite, in which case we are implicitly assuming that this infinite sum is a well-defined element of some module over the ring of differential operators in the $\lambda_i$ with polynomial coefficients.

The main point of this section is the following result.
\begin{proposition}
For all $u\in{\mathbb Z}A$ and all $l\in L$, one has $\Box_l\big(\Phi_{v,u}(\lambda)\big) = 0$.
\end{proposition}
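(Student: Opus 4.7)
The plan is to prove this by working with the generating series $\Phi_v(\lambda,T)$ directly, showing $\Box_l\bigl(\Phi_v(\lambda,T)\bigr)=0$, and then extracting the result on the individual $T$-coefficients $\Phi_{v,u}(\lambda)$.

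First I would iterate equation $(2.8)$ on the product $\Phi_v(\lambda,T)=\prod_{i=1}^N\Phi_{v_i}(\lambda_i,T)$. Since $\partial/\partial\lambda_j$ kills every factor $\Phi_{v_i}(\lambda_i,T)$ with $i\ne j$ and turns the $j$-th factor into $T^{\mathbf{a}_j}\Phi_{v_j}(\lambda_j,T)$, the Leibniz rule gives
\[
\frac{\partial}{\partial\lambda_j}\bigl(\Phi_v(\lambda,T)\bigr)=T^{\mathbf{a}_j}\,\Phi_v(\lambda,T).
\]
Applying this repeatedly for all $j$, I obtain for any nonnegative integers $c_1,\dots,c_N$
\[
\prod_{j=1}^N\Bigl(\frac{\partial}{\partial\lambda_j}\Bigr)^{c_j}\Phi_v(\lambda,T)=T^{\sum_j c_j\mathbf{a}_j}\Phi_v(\lambda,T).
\]

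Next, given $l\in L$, apply the above with $c_j=l_j$ on the positive-support factor of $\Box_l$ and with $c_j=-l_j$ on the negative-support factor. This yields
\[
\prod_{l_i>0}\Bigl(\frac{\partial}{\partial\lambda_i}\Bigr)^{l_i}\Phi_v=T^{\sum_{l_i>0}l_i\mathbf{a}_i}\Phi_v,\qquad \prod_{l_i<0}\Bigl(\frac{\partial}{\partial\lambda_i}\Bigr)^{-l_i}\Phi_v=T^{-\sum_{l_i<0}l_i\mathbf{a}_i}\Phi_v.
\]
Because $l\in L$ means $\sum_{i=1}^N l_i\mathbf{a}_i=\mathbf{0}$, the two $T$-exponents coincide; subtracting shows $\Box_l\bigl(\Phi_v(\lambda,T)\bigr)=0$.

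Finally, I would expand using $(2.9)$ and note that $\Box_l$ involves only the $\lambda$-variables, so it commutes with extracting the $T^{\beta+u}$-coefficient:
\[
0=\Box_l\bigl(\Phi_v(\lambda,T)\bigr)=\sum_{u\in\mathbb Z A}\Box_l\bigl(\Phi_{v,u}(\lambda)\bigr)\,T^{\beta+u}.
\]
Since the monomials $T^{\beta+u}$ for distinct $u\in\mathbb Z A$ are linearly independent, each coefficient $\Box_l\bigl(\Phi_{v,u}(\lambda)\bigr)$ must vanish, giving the proposition. The only real delicacy is the formal nature of the series: the sums defining $\Phi_{v,u}(\lambda)$ may be infinite, but the argument is entirely a manipulation of coefficients in $T$, so the identity holds in whatever module of formal expressions the paper has implicitly fixed for (2.10); no analytic convergence issue enters the proof.
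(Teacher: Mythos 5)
Your proof is correct and follows essentially the same route as the paper: derive $\partial_{\lambda_j}\Phi_v(\lambda,T)=T^{\mathbf{a}_j}\Phi_v(\lambda,T)$ from (2.8), conclude $\Box_l\Phi_v(\lambda,T)=\big(T^{\sum_{l_i>0}l_i\mathbf{a}_i}-T^{-\sum_{l_i<0}l_i\mathbf{a}_i}\big)\Phi_v(\lambda,T)=0$, and then read off the vanishing of each $T^{\beta+u}$-coefficient. The only difference is a small bit of extra exposition (the intermediate iterated-derivative formula and the explicit remark about $\Box_l$ commuting with coefficient extraction), which the paper leaves implicit.
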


\begin{proof}
Let $l=(l_1,\dots,l_N)\in L$.  We have
\[ \Box_l\big(\Phi_v(\lambda,T)\big) = \sum_{u\in{\mathbb Z}A} \Box_l\big(\Phi_{v,u}(\lambda)\big)T^{\beta+u}, \]
so to prove the proposition it suffices to show that $\Box_l(\Phi_v(\lambda,T)) = 0$.  It follows from Equations (2.8) and (2.9) that
\begin{equation}
\frac{\partial}{\partial\lambda_j}\big(\Phi_v(\lambda,T)\big) = T^{{\bf a}_j} \Phi_v(\lambda,T)
\end{equation}
for all $j$, therefore
\[ \Box_l\big(\Phi_v(\lambda,T)\big) = \big( T^{\sum_{l_i>0} l_i{\bf a}_i} - T^{-\sum_{l_i<0}l_i{\bf a}_i}\big) \Phi_v(\lambda,T) = 0 \]
since $\sum_{l_i>0}l_i{\bf a}_i = -\sum_{l_i<0}l_i{\bf a}_i$.  
\end{proof}

\section{Logarithm-free solutions}

We focus attention on the case $u={\bf 0}$ of Proposition 2.11, i.e., we examine the solution $\Phi_{v,{\bf 0}}(\lambda)$ of the box operators (1.1) to determine when it also satisfies the Euler operators (1.2).  By (2.10) we have
\begin{equation}
\Phi_{v,{\bf 0}}(\lambda) = \sum_{l\in L} \prod_{i=1}^N f_{v_i}^{(l_i)}(\lambda_i).
\end{equation}

To obtain logarithm-free solutions, we take $m=0$ in Equations (2.2)--(2.5).  This gives
\begin{equation}
f_z^{(k)}(t) = [z]_kt^{z+k}
\end{equation}
if $z\in{\mathbb C}\setminus{\mathbb Z}_{<0}$ or if $z\in{\mathbb Z}_{<0}$ and $k<-z$.  It gives
\begin{equation}
f_z^{(k)}(t) = t^{z+k}\cdot\text{(linear polynomial in $\log t$)}
\end{equation}
if $z\in{\mathbb Z}_{<0}$ and $k\geq -z$.  

Suppose first that $v=(v_1,\dots,v_N)\in({\mathbb C}\setminus{\mathbb Z}_{<0})^N$.  Then (3.2) gives $f_{v_i}^{(k)}(\lambda_i) = [v_i]_k\lambda_i^{v_i+k}$ for $i=1,\dots,N$ and (3.1) becomes
\begin{equation}
\Phi_{v,{\bf 0}}(\lambda) = \sum_{l\in L} [v]_l\lambda^{v+l}.
\end{equation}
Since $\sum_{i=1}^N v_i{\bf a}_i = \beta$, a trivial computation shows that the formal series (3.4) satisfies the Euler operators (1.2), hence (3.4) is a formal solution of the $A$-hypergeomet\-ric system.

More generally, if $v\in{\mathbb C}^N$ has some negative integer coordinates, then some terms given by (3.3) may appear in (3.1) and $\Phi_{v,{\bf 0}}(\lambda)$ will not satisfy any Euler equation because of the presence of the $\log\lambda_i$-terms.  However, if no terms from (3.3) appear, then it will satisfy an $A$-hypergeometric system.  Note that from (3.2) and (3.3),
\begin{equation}
f_{v_i}^{(l_i)}(\lambda_i) = 0 \quad\text{if and only if $v_i\in{\mathbb Z}_{\geq 0}$ and $l_i<-v_i$,}
\end{equation}
hence $\prod_{i=1}^N f_{v_i}^{(l_i)}(\lambda_i)=0$ unless ${\rm nsupp}(v+l)\subseteq{\rm nsupp}(v)$.  We therefore always have
\begin{equation}
\Phi_{v,{\bf 0}}(\lambda) = \sum_{l\in L'_v} \prod_{i=1}^N f_{v_i}^{(l_i)}(\lambda_i),
\end{equation}
where $L'_v = \{l\in L\mid {\rm nsupp}(v+l)\subseteq{\rm nsupp}(v)\}$.  From (3.2) and (3.3) we observe that
\begin{equation}
\text{$\log\lambda_i$ appears in $f_{v_i}^{(l_i)}(\lambda_i)$ if and only if $v_i\in{\mathbb Z}_{<0}$ and $l_i\geq -v_i$.}
\end{equation}
For $l\in L'_v$, this happens if and only if ${\rm nsupp}(v+l)$ is a proper subset of ${\rm nsupp}(v)$.  So if we assume that $v$ has minimal negative support, then $L'_v = L_v$ and $f_{v_i}^{(l_i)}(\lambda_i)$ is given by (3.2) for all $l\in L_v$ and all $i$.  We therefore have
\begin{equation}
\Phi_{v,{\bf 0}}(\lambda) = \sum_{l\in L_v}[v]_l\lambda^{v+l}.
\end{equation}
It is clear that all monomials on the right-hand side of (3.8) satisfy Equations~(1.2).  It follows that $\Phi_{v,{\bf 0}}(\lambda)$ is a solution of the $A$-hypergeometric system (1.1), (1.2) if and only if $v$ has minimal negative support.  Note that the solution $\Phi_{v,{\bf 0}}(\lambda)$ given by (3.8) is the solution denoted $F(\lambda)$ in (1.3).

\section{Logarithmic solutions}

To obtain the logarithmic solutions described in Section 1, we shall also need the case $m=1$ of Equations (2.2)--(2.5).  Taking $m=1$ gives
\begin{equation}
f_z^{(0)}(t) = t^z\log t,
\end{equation}
\begin{equation}
f_z^{(k)}(t) = [z]_kt^{z+k}\log t + s_{-k,-k-1}(z)t^{z+k}\quad\text{for $k<0$,}
\end{equation}
and, if $z\in{\mathbb C}\setminus{\mathbb Z}_{<0}$ or if $z\in{\mathbb Z}_{<0}$ and $k<-z$,
\begin{equation}
f_z^{(k)}(t) = [z]_kt^{z+k}\big(\log t-m_{k,1}(z)\big)\quad\text{for $k>0$.}
\end{equation}
If $z\in{\mathbb Z}_{<0}$ and $k\geq -z$, then
\begin{equation}
f_z^{(k)}(t) = t^{z+k}\cdot\text{(polynomial of degree 2 in $\log t$).}
\end{equation}

Let $v\in{\mathbb C}^N$.  Fix $i\in\{1,\dots,N\}$ and let $f_{v_i}^{(k)}(t)$ be defined by (4.1)--(4.4).  For $j\in\{1,\dots,N\}$, $j\neq i$, let $f_{v_j}^{(k)}(t)$ be defined by (3.2)--(3.3).  We have the associated generating series
\[ \Phi_{v_m}(\lambda_m,T) = \sum_{k_m\in{\mathbb Z}}f_{v_m}^{(k_m)}(\lambda_m)T^{(v_m+k_m){\bf a}_m} \]
for $m\in\{1,\dots,N\}$ and their product
\begin{equation}
\Phi_{v,i}(\lambda,T) = \prod_{m=1}^N \Phi_{v_m}(\lambda_m,T) = \sum_{u\in{\mathbb Z}A} \Phi_{v,i,u}(\lambda)T^{\beta+u},
\end{equation}
where
\begin{equation}
\Phi_{v,i,u}(\lambda) = \sum_{\sum k_m{\bf a}_m = u} \prod_{m=1}^N f_{v_m}^{(k_m)}(\lambda_m).
\end{equation}
By Proposition 2.11, the $\Phi_{v,i,u}(\lambda)$ satisfy the box operators (1.1).

We focus attention on the case $u={\bf 0}$:
\begin{equation}
\Phi_{v,i,{\bf 0}}(\lambda) = \sum_{l\in L}\prod_{m=1}^N f_{v_m}^{(l_m)}(\lambda_m).
\end{equation}
Let
\[ L'_{v,\hat{\imath}} = \{l\in L\mid \hat{\imath}\text{-}{\rm nsupp}(v+l)\subseteq\hat{\imath}\text{-}{\rm nsupp}(v)\}. \]
If $l\in L\setminus L'_{v,\hat{\imath}}$, then there exists $j\neq i$ such that $v_j+l_j\in{\mathbb Z}_{<0}$ and $v_j\in{\mathbb Z}_{\geq 0}$.  By~(3.5) we then have $f_{v_j}^{(l_j)}(\lambda_j) = 0$, so (4.7) becomes
\begin{equation}
\Phi_{v,i,{\bf 0}}(\lambda) = \sum_{l\in L'_{v,\hat{\imath}}}\prod_{m=1}^N f_{v_m}^{(l_m)}(\lambda_m).
\end{equation}

We now suppose that $v$ has minimal $\hat{\imath}$-negative support.  This implies that $L'_{v,\hat{\imath}} = L_{v,\hat{\imath}}$, so the sum over $L'_{v,\hat{\imath}}$ in (4.8) becomes a sum over $L_{v,\hat{\imath}}$.  Furthermore, if $l\in L_{v,\hat{\imath}}$, there does not exist $j\neq i$ such that $v_j\in{\mathbb Z}_{<0}$ and $v_j+l_j\in{\mathbb Z}_{\geq 0}$.  In particular, no $f_{v_j}^{(l_j)}(\lambda_j)$ with $j\neq i$ is given by (3.3), all are given by (3.2).  We can thus rewrite~(4.8) as
\begin{equation}
\Phi_{v,i,{\bf 0}}(\lambda) = \sum_{l\in L_{v,\hat{\imath}}} f_{v_i}^{(l_i)}(\lambda_i)\prod_{\substack{j=1 \\ j\neq i}}^N [v_j]_{l_j}\lambda_j^{v_j+l_j}.
\end{equation}

Now assume also that $v$ has minimal negative support.  For $l\in L_{v,\hat{\imath}}$, this implies that if $v_i\in{\mathbb Z}_{<0}$, then $v_i+l_i\in{\mathbb Z}_{<0}$ also (otherwise, ${\rm nsupp}(v+l)$ would be a proper subset of ${\rm nsupp}(v)$).  It follows that for $l\in L_{v,\hat{\imath}}$, no $f_{v_i}^{(l_i)}(\lambda_i)$ is given by (4.4), all are given by (4.1)--(4.3).  This allows us to write (4.9) as
\begin{multline}
\Phi_{v,i,{\bf 0}}(\lambda) = \\
\sum_{l\in L_{v,\hat{\imath}}} [v]_l\lambda^{v+l}\log\lambda_i + \sum_{l\in L_{v,\hat{\imath}}}\lambda^{v+l}\prod_{\substack{j=1\\ j\neq i}}^N [v_j]_{l_j}\cdot \begin{cases} 0 & \text{if $l_i=0$,} \\ -[v_i]_{l_i}m_{l_i,1}(v_i) & \text{if $l_i>0$,} \\ s_{-l_i,-l_i-1}(v_i) & \text{if $l_i<0$.} \end{cases}
\end{multline}

Finally, suppose that $l\in L_{v,\hat{\imath}}$ but $l\not\in L_v$.  Since $v$ has minimal negative support, this implies that $v_i\in{\mathbb Z}_{\geq 0}$ but $v_i+l_i\in{\mathbb Z}_{<0}$, hence $[v_i]_{l_i}=0$.  We can thus replace the first sum on the right-hand side of (4.10) by a sum over $L_v$.  We summarize this discussion with the following explicit version of Theorem 1.6.
\begin{theorem}
Suppose that $v$ has minimal $\hat{\imath}$-negative support and minimal negative support.  Then $\Phi_{v,i,{\bf 0}}(\lambda) = F(\lambda)\log\lambda_i + G_i(\lambda)$ is a quasisolution, where
\[ F(\lambda) = \sum_{l\in L_v} [v]_l\lambda^{v+l} \]
and
\[ G_i(\lambda) =  \sum_{l\in L_{v,\hat{\imath}}}\lambda^{v+l}\prod_{\substack{j=1\\ j\neq i}}^N [v_j]_{l_j}\cdot \begin{cases} 0 & \text{if $l_i=0$,} \\ -[v_i]_{l_i}m_{l_i,1}(v_i) & \text{if $l_i>0$,} \\ s_{-l_i,-l_i-1}(v_i) & \text{if $l_i<0$.} \end{cases} \]
\end{theorem}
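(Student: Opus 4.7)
My plan is to assemble the calculations already carried out earlier in this section. By Proposition 2.11, the series $\Phi_{v,i,{\bf 0}}(\lambda)$ automatically satisfies the box operators (1.1), so the only task is to verify that it has the explicit shape $F(\lambda)\log\lambda_i + G_i(\lambda)$ with $F$ and $G_i$ as stated. I would organize the argument as a sequence of three reductions mirroring the transition $(4.7)\to(4.8)\to(4.9)\to(4.10)$, followed by a short tidying step that passes from $L_{v,\hat{\imath}}$ to $L_v$ in the logarithmic coefficient.

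First, starting from (4.7), any $l\in L\setminus L'_{v,\hat{\imath}}$ contributes zero, because it produces some $j\neq i$ with $v_j\in{\mathbb Z}_{\geq 0}$ and $v_j+l_j\in{\mathbb Z}_{<0}$, forcing $f_{v_j}^{(l_j)}(\lambda_j)=0$ by (3.5); this gives (4.8). Second, the hypothesis that $v$ has minimal $\hat{\imath}$-negative support identifies $L'_{v,\hat{\imath}}$ with $L_{v,\hat{\imath}}$ and rules out any $j\neq i$ with $v_j\in{\mathbb Z}_{<0}$ and $v_j+l_j\in{\mathbb Z}_{\geq 0}$, so each $f_{v_j}^{(l_j)}(\lambda_j)$ with $j\neq i$ is log-free and given by the simple formula (3.2), which yields (4.9). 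Third, the hypothesis of minimal negative support ensures that for $l\in L_{v,\hat{\imath}}$, if $v_i\in{\mathbb Z}_{<0}$ then also $v_i+l_i\in{\mathbb Z}_{<0}$, since otherwise ${\rm nsupp}(v+l)$ would be a proper subset of ${\rm nsupp}(v)$. Consequently $f_{v_i}^{(l_i)}(\lambda_i)$ never falls under the degenerate case (4.4) and is given by (4.1), (4.2), or (4.3) according to whether $l_i=0$, $l_i<0$, or $l_i>0$. Substituting these three formulas into (4.9) and separating the $\log\lambda_i$ terms from the rest produces precisely the identity (4.10).

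It remains to observe that the logarithmic coefficient in (4.10), a priori indexed by $L_{v,\hat{\imath}}$, reduces to a sum over $L_v$. Indeed, for any $l\in L_{v,\hat{\imath}}\setminus L_v$, minimal negative support of $v$ forces $v_i\in{\mathbb Z}_{\geq 0}$ with $v_i+l_i\in{\mathbb Z}_{<0}$, whence $[v_i]_{l_i}=0$ and the corresponding term vanishes. The logarithmic part therefore equals $F(\lambda)\log\lambda_i$ with $F$ as in (1.3), and the remaining log-free part is exactly the stated $G_i(\lambda)$.

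I do not anticipate a genuine obstacle here; the only care needed is in matching each index $l$ to the correct subcase among (3.2)--(3.3) for coordinates $j\neq i$ and (4.1)--(4.4) for the $i$-th coordinate. The three facts that the proof exploits --- the vanishing (3.5), minimal $\hat{\imath}$-negative support, and minimal negative support --- are exactly what is required to exclude, at each stage, all the subcases that would otherwise introduce unwanted zeros, extraneous $\log\lambda_j$ factors for $j\neq i$, or higher powers of $\log\lambda_i$.
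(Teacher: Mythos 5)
Your proposal reproduces the paper's own argument step for step: Proposition 2.11 disposes of the box operators, (3.5) yields the passage from $L$ to $L'_{v,\hat{\imath}}$, minimal $\hat{\imath}$-negative support identifies $L'_{v,\hat{\imath}}$ with $L_{v,\hat{\imath}}$ and keeps the $j\neq i$ factors log-free, minimal negative support keeps $f_{v_i}^{(l_i)}$ out of the degenerate case (4.4), and the final observation that $[v_i]_{l_i}=0$ on $L_{v,\hat{\imath}}\setminus L_v$ collapses the logarithmic coefficient to $F(\lambda)$. This is exactly the chain $(4.7)\to(4.8)\to(4.9)\to(4.10)$ together with the concluding remark in the text, so the proof is correct and takes the same route as the paper.
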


\begin{example}{\rm 
Let ${\bf a}_1,{\bf a}_2,{\bf a}_3,{\bf a}_4\in{\mathbb R}^3$ be the columns of the matrix
\[ A=\left( \begin{array}{rrrr} 1 & 0 & 0 & 1 \\ 0 & 1 & 0 & 1 \\ 0 & 0 & 1 & -1 \end{array}\right). \]
Let $a,b\in{\mathbb C}\setminus{\mathbb Z}_{\leq 0}$, $\beta = (-a,-b,0)$, and $v=(-a,-b,0,0)$.  One calculates that $L=\{(-l,-l,l,l)\mid l\in{\mathbb Z}\}$, $v$ has minimal negative support and minimal $\hat{\imath}$-negative support for all $i$, and that
\[ L_v = L_{v,\hat{1}} = \cdots =L_{v,\hat{4}} = \{(-l,-l,l,l)\mid l\in{\mathbb Z}_{\geq 0}\}. \]
We have the logarithm-free solution
\begin{equation}
F(\lambda) = \lambda_1^{-a}\lambda_2^{-b}\sum_{l=0}^\infty [-a]_{-l}[-b]_{-l}([0]_l)^2 \bigg(\frac{\lambda_3\lambda_4}{\lambda_1\lambda_2}\bigg)^l.
\end{equation}
To find a logarithmic solution, we compute the $G_i(\lambda)$ using Theorem 4.11.  We have
\[ G_1(\lambda) = \lambda_1^{-a}\lambda_2^{-b}\sum_{l=1}^\infty [-a]_{-l}[-b]_{-l}([0]_l)^2\frac{s_{l,l-1}(-a)}{[-a]_{-l}}\bigg(\frac{\lambda_3\lambda_4}{\lambda_1\lambda_2}\bigg)^l, \]
\[ G_2(\lambda) = \lambda_1^{-a}\lambda_2^{-b}\sum_{l=1}^\infty [-a]_{-l}[-b]_{-l}([0]_l)^2\frac{s_{l,l-1}(-b)}{[-b]_{-l}}\bigg(\frac{\lambda_3\lambda_4}{\lambda_1\lambda_2}\bigg)^l, \]
and for $i=3,4$
\[ G_i(\lambda) = -\lambda_1^{-a}\lambda_2^{-b}\sum_{l=1}^\infty [-a]_{-l}[-b]_{-l}([0]_l)^2m_{l,1}(0)\bigg(\frac{\lambda_3\lambda_4}{\lambda_1\lambda_2}\bigg)^l. \]
This gives us a complete set of quasisolutions, so we may apply Proposition 1.4 using $(-1,-1,1,1)\in L$ to get the logarithmic solution
\begin{multline}
\lambda_1^{-a}\lambda_2^{-b}\bigg[\bigg(\sum_{l=0}^\infty [-a]_{-l}[-b]_{-l}([0]_l)^2 \bigg(\frac{\lambda_3\lambda_4}{\lambda_1\lambda_2}\bigg)^l\bigg)\log\bigg(\frac{\lambda_3\lambda_4}{\lambda_1 \lambda_2}\bigg) \\
-\sum_{l=1}^\infty \bigg(2m_{l,1}(0) + \frac{s_{l,l-1}(-a)}{[-a]_{-l}} + \frac{s_{l,l-1}(-b)}{[-b]_{-l}}\bigg)
[-a]_{-l}[-b]_{-l}([0]_l)^2 \bigg(\frac{\lambda_3\lambda_4}{\lambda_1\lambda_2}\bigg)^l\bigg].
\end{multline}

The formulas in this example become more recognizable when expressed in the classical Pochhammer notation: for $z\in{\mathbb C}$ and $l\geq 0$,
\[ (z)_l = z(z+1)\cdots(z+l-1). \]
One has $[-a]_{-l} = (-1)^l(a)_l$, $[-b]_{-l} = (-1)^l(b)_l$, and $[0]_l = 1/(1)_l = 1/l!$ for $l\geq 0$, so (4.12) becomes
\[ F(\lambda) = \lambda_1^{-a}\lambda_2^{-b} \sum_{l=0}^\infty \frac{(a)_l(b)_l}{l!^2}\bigg(\frac{\lambda_3 \lambda_4}{\lambda_1\lambda_2}\bigg)^l = \lambda_1^{-a}\lambda_2^{-b}{}_2F_1\bigg(a,b;1;\frac{\lambda_3 \lambda_4}{\lambda_1\lambda_2}\bigg) \]
and (4.13) becomes (using the definitions of $m_{l,1}(0)$, $s_{l,l-1}(-a)$, and $s_{l,l-1}(-b)$)
\begin{multline*}
\lambda_1^{-a}\lambda_2^{-b}\bigg[{}_2F_1\bigg(a,b;1;\frac{\lambda_3 \lambda_4}{\lambda_1\lambda_2}\bigg) 
\log\bigg(\frac{\lambda_3\lambda_4}{\lambda_1\lambda_2}\bigg) \\
 + \sum_{l=1}^\infty \frac{(a)_l(b)_l}{l!^2} \bigg(\sum_{m=0}^{l-1} \frac{1}{a+m} + \frac{1}{b+m} - \frac{2}{1+m}\bigg)\bigg(\frac{\lambda_3\lambda_4}{\lambda_1\lambda_2}\bigg)^l\bigg].
\end{multline*}
After the specialization $\lambda_i\mapsto 1$ for $i=1,2,3$ and $\lambda_4\mapsto t$, these expressions give two independent solutions at $t=0$ of the Gaussian hypergeometric equation
\[ t(1-t)y'' + (1-(a+b+1)t)y'-aby = 0. \] }
\end{example}

\begin{example} {\rm 
(see \cite[Example 3.5.2]{SST}) Let ${\bf a}_1,\dots,{\bf a}_5\in{\mathbb R}^3$ be the columns of the matrix
\[ A = \left(\begin{array}{rrrrr} 1 & 1 & 1 & 1 & 1 \\ -1 & 1 & 1 & -1 & 0 \\ -1 & -1 & 1 & 1 & 0 \end{array}\right). \]
Take $\beta = (1,0,0)$ and $v=(0,0,0,0,1)$.  One calculates that 
\[ L = \{(a,b,a,b,-2a-2b)\mid a,b\in{\mathbb Z}\}, \]
$v$ has minimal negative support and minimal $\hat{\imath}$-negative support for all $i$, and that
\[ L_v = L_{v,\hat{1}} = \cdots = L_{v,\hat{4}} = \{(0,0,0,0,0)\}, \]
while
\[ L_{v,\hat{5}} = \{(a,b,a,b,-2a-2b)\mid a,b\in{\mathbb Z}_{\geq 0}\}. \]
We get from Theorem 4.11 that $F(\lambda) = \lambda_5$, $G_i(\lambda) = 0$ for $i=1,\dots,4$, and
\begin{equation}
G_5(\lambda) = \lambda_5\sum_{\substack{a,b=0\\ (a,b)\neq(0,0)}}^\infty \frac{(2a+2b-2)!}{a!^2b!^2} \bigg(
\frac{\lambda_1\lambda_3}{\lambda_5^2}\bigg)^a\bigg(\frac{\lambda_2\lambda_4}{\lambda_5^2}\bigg)^b.
\end{equation}
Associated to $(-1,0,-1,0,2)\in L$ we have by Proposition 1.4 the solution
\[ \lambda_5\log(\lambda_5^2/\lambda_1\lambda_3) + 2G_5(\lambda); \]
associated to $(0,1,0,1,-2)\in L$ we have by Proposition 1.4 the solution
\[ \lambda_5\log(\lambda_2\lambda_4/\lambda_5^2) - 2G_5(\lambda). \] }
\end{example}

\section{Higher logarithmic solutions}

In this section we describe solutions that are quadratic in the $\log\lambda_i$.  It will then be clear how to extend this construction to obtain solutions involving higher powers of the $\log\lambda_i$.  

Let $i,j\in\{1,\dots,N\}$ (we allow $i=j$), let $F(\lambda)$, $G_i(\lambda)$, and $G_j(\lambda)$ be given by Theorem~4.11, and let $H_{ij}(\lambda)$ be a series of the form $\sum_{l\in L} b_l\lambda^{v+l}$.  We call
\begin{equation}
F(\lambda)\log\lambda_i\log\lambda_j + G_i(\lambda)\log\lambda_j + G_j(\lambda)\log\lambda_i + H_{ij}(\lambda)
\end{equation}
a {\it second-order quasisolution\/} if it satisfies the box operators (1.1).

Let $\{H_{ij}(\lambda)\}_{i,j=1}^N$ be a collection of series of the form $\sum_{l\in L} b_l\lambda^{v+l}$ satisfying $H_{ij}(\lambda) = H_{ji}(\lambda)$ for all $i,j$.  We call the collection
\begin{equation}
\{F(\lambda)\log\lambda_i\log\lambda_j + G_i(\lambda)\log\lambda_j + G_j(\lambda)\log\lambda_i + H_{ij}(\lambda) \}_{i,j=1}^N
\end{equation}
a {\it complete set of second-order quasisolutions\/} if each element is a quasisolution.  (Note that the pairs $(i,j)$ and $(j,i)$ give the same second-order quasisolution.)

\begin{proposition}
Suppose that $(5.2)$ is  a complete set of second-order quasisolutions and let $l=(l_1,\dots,l_N)$ and $l' = (l'_1,\dots,l'_N)$ be elements of $L$.  Then the expression
\begin{multline}
\sum_{i,j=1}^N l_il'_j\bigg(F(\lambda)\log\lambda_i\log\lambda_j + G_i(\lambda)\log\lambda_j + G_j(\lambda)\log\lambda_i + H_{ij}(\lambda)\bigg) = \\
F(\lambda)\log\lambda^l\log\lambda^{l'} + \bigg(\sum_{i=1}^N l_iG_i(\lambda)\bigg)\log\lambda^{l'} + \bigg(\sum_{j=1}^N l'_jG_j(\lambda)\bigg) \log\lambda^l + \sum_{i,j=1}^N l_il'_j H_{ij}(\lambda)
\end{multline}
is a solution of the $A$-hypergeometric system $(1.1)$, $(1.2)$.
\end{proposition}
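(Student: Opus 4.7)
The plan is to mimic the argument used for Proposition 1.4, with a straightforward upgrade to accommodate the quadratic log factors.

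First, I would verify that the equality asserted in (5.4) is a purely formal identity: expand $\log\lambda^l=\sum_i l_i\log\lambda_i$ and $\log\lambda^{l'}=\sum_j l'_j\log\lambda_j$ in the right-hand side, combine the cross terms $l_il'_j + l_jl'_i$ where appropriate and use $H_{ij}=H_{ji}$, and one recovers the left-hand side term by term.

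Second, I would argue that the left-hand side of (5.4) satisfies the box operators (1.1). This is immediate from linearity: the left-hand side is a $\mathbb{C}$-linear combination of elements of the complete set (5.2), and each of those is a second-order quasisolution, hence annihilated by every $\Box_l$. Consequently the right-hand side of (5.4) also satisfies (1.1).

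Third, I would check that the right-hand side of (5.4) satisfies the Euler operators (1.2). For this I would establish the following elementary extension of the fact cited in the proof of Proposition 1.4: if $\lambda^c$ satisfies (1.2) and $l,l'\in L$, then $\lambda^c\log\lambda^l\log\lambda^{l'}$ also satisfies (1.2). Indeed, writing $D_i=\sum_{j=1}^N a_{ij}\lambda_j\partial/\partial\lambda_j$, the product rule gives
\[ D_i\bigl(\lambda^c\log\lambda^l\log\lambda^{l'}\bigr) = \bigl(\textstyle\sum_j a_{ij}c_j\bigr)\lambda^c\log\lambda^l\log\lambda^{l'} + \lambda^c\bigl(\textstyle\sum_j a_{ij}l_j\bigr)\log\lambda^{l'} + \lambda^c\bigl(\textstyle\sum_j a_{ij}l'_j\bigr)\log\lambda^l, \]
and the last two parenthesized sums vanish because $l,l'\in L$, while the first equals $\beta_i\lambda^c\log\lambda^l\log\lambda^{l'}$ by hypothesis on $\lambda^c$. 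The analogous statement for a single log factor is already in Proposition 1.4, and the plain monomial case is trivial. Applying these three observations term by term to the right-hand side of (5.4) — noting that every monomial occurring in $F$, $G_i$, $G_j$, and $H_{ij}$ is of the form $\lambda^{v+l''}$ with $Av=\beta$ and $Al''=\mathbf{0}$, hence satisfies (1.2) — shows that each summand, and therefore the whole expression, satisfies the Euler operators.

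No step presents a serious obstacle; the only computational content is the product-rule verification above, and everything else is just linearity together with the definitions. The proof is essentially the bilinear analogue of Proposition 1.4.
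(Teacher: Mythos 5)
Your proof is correct and follows essentially the same route as the paper: verify (5.4) as a formal identity, note the left-hand side is a linear combination of box-operator solutions, and use the elementary observation that $\lambda^c\log\lambda^l\log\lambda^{l'}$, $\lambda^c\log\lambda^l$, and $\lambda^c\log\lambda^{l'}$ satisfy the Euler operators whenever $\lambda^c$ does. (One small point: the formal identity follows from direct expansion of $\log\lambda^l=\sum_i l_i\log\lambda_i$ and $\log\lambda^{l'}=\sum_j l'_j\log\lambda_j$ alone; the symmetry $H_{ij}=H_{ji}$ is not actually needed there, since both sides already run over ordered pairs $(i,j)$.)
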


\begin{proof}
The equality (5.4) is straightforward to check.  The left-hand side of (5.4) satisfies the box operators because it is a linear combination of solutions of the box operators.  If $\lambda^c$ satisfies the Euler operators and $l,l'\in L$, then $\lambda^c\log\lambda^l\log\lambda^{l'}$, $\lambda^c\log\lambda^l$, and $\lambda^c\log\lambda^{l'}$ are also solutions of the Euler operators, so the right-hand side of (5.4) satisfies the Euler operators.
\end{proof}

Let $i,j\in\{1,\dots,N\}$, $i\neq j$, and let $v=(v_1,\dots,v_N)\in{\mathbb C}^N$.  Define the 
{\it $\hat{\imath}\hat{\jmath}$-negative support of $v$\/} to be
\[ \hat{\imath}\hat{\jmath}\text{-}{\rm nsupp}(v) = \{k\in\{1,\dots,\hat{\imath},\dots,\hat{\jmath},\dots,N\}\mid \text{$v_k$ is a negative integer}\}. \]
We say that $v$ has {\it minimal $\hat{\imath}\hat{\jmath}$-negative support\/} if $\hat{\imath}\hat{\jmath}\text{-}{\rm nsupp}(v+l)$ is not a proper subset of $\hat{\imath}\hat{\jmath}\text{-}{\rm nsupp}(v)$ for any $l\in L$.  Define
\[ L_{v,\hat{\imath}\hat{\jmath}} = \{l\in L\mid \hat{\imath}\hat{\jmath}\text{-}{\rm nsupp}(v+l) = \hat{\imath}\hat{\jmath}\text{-}{\rm nsupp}(v)\}. \]
Note that $L_{v,\hat{\imath}}$ and $L_{v,\hat{\jmath}}$ are both contained in $L_{v,\hat{\imath}\hat{\jmath}}$.  

\begin{theorem}
Let $F(\lambda)$ and the $G_i(\lambda)$ be as in Theorem $4.11$. \\
{\bf (a)} If $v$ has minimal negative support and minimal $\hat{\imath}$-negative support for some $i\in\{1,\dots,N\}$, then there exists a second-order quasisolution
\[ F(\lambda)\log^2\lambda_i + 2G_i(\lambda)\log\lambda_i + H_{ii}(\lambda) \]
with the support of $H_{ii}(\lambda)$ contained in $L_{v,\hat{\imath}}$. \\
{\bf (b)} If $v$ has minimal negative support, minimal $\hat{\imath}$-negative support, minimal $\hat{\jmath}$-negative support and minimal $\hat{\imath}\hat{\jmath}$-negative support for some $i,j\in\{1,\dots,N\}$, $i\neq j$, then there exists a second-order quasisolution
\[ F(\lambda)\log\lambda_i\log\lambda_j + G_i(\lambda)\log\lambda_j + G_j(\lambda)\log\lambda_i + H_{ij}(\lambda) \]
with the support of $H_{ij}(\lambda)$ contained in $L_{v,\hat{\imath}\hat{\jmath}}$.
\end{theorem}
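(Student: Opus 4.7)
The plan is to mimic Section~4 but start from a seed with higher powers of $\log t$: for part~(a), take $f_{v_i}^{(0)}(t)=t^{v_i}\log^2 t$ (so $m=2$) and $f_{v_k}^{(0)}(t)=t^{v_k}$ for $k\neq i$; for part~(b), take $f_{v_i}^{(0)}(t)=t^{v_i}\log t$, $f_{v_j}^{(0)}(t)=t^{v_j}\log t$, and $f_{v_k}^{(0)}(t)=t^{v_k}$ for $k\notin\{i,j\}$. In either case form the product generating series as in~(2.7)--(2.9), extract its $u={\bf 0}$ component, and observe that this already satisfies the box operators by Proposition~2.11. The support hypotheses on $v$ will eliminate every ``bad'' factor of type~(2.5) or~(3.3), so the remaining factors at position~$i$ (and $j$) expand as polynomials of degree at most $2$ in $\log\lambda_i$ (and $\log\lambda_j$).

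For part~(a), the argument parallels the passage from~(4.7) to~(4.9): minimal $\hat{\imath}$-negative support restricts the sum to $l\in L_{v,\hat{\imath}}$ and forces~(3.2) at every $k\neq i$, while minimal negative support rules out~(2.5) at position~$i$. Expanding each factor $f_{v_i}^{(l_i)}(\lambda_i)$ via the $m=2$ versions of~(2.3)--(2.4) and collecting the three powers of $\log\lambda_i$, the coefficient of $\log^2\lambda_i$ is $\sum_{l\in L_{v,\hat{\imath}}}[v]_l\lambda^{v+l}$, which collapses to $F(\lambda)$ by the same $[v_i]_{l_i}=0$ vanishing used just before Theorem~4.11; the coefficient of $\log\lambda_i$ comes out to exactly twice the $m=1$ expression, hence equals $2G_i(\lambda)$; and the constant remainder defines $H_{ii}(\lambda)$, manifestly supported on $L_{v,\hat{\imath}}$.

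For part~(b), minimal $\hat{\imath}\hat{\jmath}$-negative support restricts the sum to $l\in L_{v,\hat{\imath}\hat{\jmath}}$ and forces~(3.2) at every $k\notin\{i,j\}$. To force~(4.1)--(4.3) (rather than~(4.4)) at position~$i$, observe that if $v_i\in{\mathbb Z}_{<0}$ and $v_i+l_i\in{\mathbb Z}_{\geq 0}$ for some $l\in L_{v,\hat{\imath}\hat{\jmath}}$, then $\hat{\jmath}$-${\rm nsupp}(v+l)=\hat{\jmath}$-${\rm nsupp}(v)\setminus\{i\}$ is a proper subset, contradicting minimal $\hat{\jmath}$-negative support; symmetrically minimal $\hat{\imath}$-negative support handles position~$j$. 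Multiplying out the resulting product and grouping by $\log$-content then yields $F(\lambda)\log\lambda_i\log\lambda_j+G_j(\lambda)\log\lambda_i+G_i(\lambda)\log\lambda_j+H_{ij}(\lambda)$, with $H_{ij}$ manifestly supported on $L_{v,\hat{\imath}\hat{\jmath}}$; the symmetry $H_{ij}=H_{ji}$ is automatic because the construction is symmetric in $i$ and $j$.

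The main technical point is verifying that the coefficient sums reduce from $L_{v,\hat{\imath}\hat{\jmath}}$ down to the correct smaller sets $L_v$, $L_{v,\hat{\imath}}$, $L_{v,\hat{\jmath}}$ needed to recognize them as $F(\lambda)$, $G_i(\lambda)$, $G_j(\lambda)$. This is dispatched by the same dichotomy used at the end of Section~4: whenever $l$ lies in the larger but not the smaller index set, at some offending position $k\in\{i,j\}$ either $v_k\in{\mathbb Z}_{\geq 0}$ and $v_k+l_k\in{\mathbb Z}_{<0}$, in which case $[v_k]_{l_k}=0$ annihilates the summand, or $v_k\in{\mathbb Z}_{<0}$ and $v_k+l_k\in{\mathbb Z}_{\geq 0}$, which is excluded by the appropriate minimality hypothesis. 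The same template, with $m$ raised further and minimality imposed on all $r$-fold $\hat{\imath}_1\cdots\hat{\imath}_r$-negative supports, will produce quasisolutions involving higher powers of the logarithms.
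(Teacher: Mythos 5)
Your proposal is correct and follows essentially the same route as the paper's own proof: seed the generating-series construction of Section 2 with $m=2$ at position $i$ (part (a)) or $m=1$ at both positions $i$ and $j$ (part (b)), extract the $u=\mathbf{0}$ component via Proposition 2.11, use the minimality hypotheses to eliminate the bad factors of type (2.5)/(3.3), and identify the $\log$-coefficients with $F$, $2G_i$ (resp.\ $G_i$, $G_j$), $H$ via the $[v_k]_{l_k}=0$ vanishing on the difference sets. The only difference is cosmetic: you spell out the dichotomy argument slightly more explicitly, but the decomposition and key observations are identical.
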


\noindent{\bf Remark:}  The hypothesis of Theorem 5.5(b) is somewhat redundant: if $v$ has minimal $\hat{\imath}$-negative support and minimal $\hat{\jmath}$-negative support for $i\neq j$, then $v$ has minimal negative support.

We prove Theorem 5.5 by giving explicit formulas for the second-order quasisolutions.  To prove part (a) of the theorem we take $m=2$ in Eqns.\ (2.2)--(2.5).  This gives
\begin{equation}
f_z^{(0)}(t) = t^z\log^2 t,
\end{equation}
\begin{equation}
f_z^{(k)}(t) = t^{z+k}\big([z]_k\log^2 t + 2s_{-k,-k-1}(z)\log t + 2s_{-k,-k-2}(z)\big) \quad\text{for $k<0$,}
\end{equation}
and, if $z\not\in{\mathbb Z}_{<0}$ or if $z\in{\mathbb Z}_{<0}$ and $k<-z$, then
\begin{equation}
f_z^{(k)}(t) = [z]_kt^{z+k}\big(\log^2 t - 2m_{k,1}(z)\log t + 2m_{k,2}(z)\big)\quad\text{for $k>0$.}
\end{equation}
If $z\in{\mathbb Z}_{<0}$ and $k\geq -z$, then
\begin{equation}
f_z^{(k)}(t) = t^{z+k}\cdot\text{(polynomial of degree $3$ in $\log t$).}
\end{equation}

We now proceed as in Section 4.  Let $v\in{\mathbb C}^N$ and fix $i\in\{1,\dots,N\}$.  Let~$f_{v_i}^{(k)}(t)$ be defined by (5.6)--(5.9) and for $j\in\{1,\dots,N\}$, $j\neq i$, let $f_{v_j}^{(k)}(t)$ be defined by (3.2)--(3.3).  For all $m\in\{1,\dots,N\}$ we form the generating series
\[ \Phi_{v_m}(\lambda_m,T) = \sum_{k_m\in{\mathbb Z}}f_{v_m}^{(k_m)}(\lambda_m)T^{(v_m+k_m){\bf a}_m} \]
and take their product
\begin{equation}
\Phi_{v,i}(\lambda,T) = \prod_{m=1}^N \Phi_{v_m}(\lambda_m,T) = \sum_{u\in{\mathbb Z}A} \Phi_{v,i,u}(\lambda)T^{\beta+u},
\end{equation}
where
\begin{equation}
\Phi_{v,i,u}(\lambda) = \sum_{\sum k_m{\bf a}_m=u} \prod_{m=1}^N f_{v_m}^{(k_m)}(\lambda_m).
\end{equation}
Proposition 2.11 implies that the $\Phi_{v,i,u}(\lambda)$ satisfy the box operators (1.1).

We again focus on the case $u={\bf 0}$:
\begin{equation}
\Phi_{v,i,{\bf 0}}(\lambda) = \sum_{l\in L}\prod_{m=1}^N f_{v_m}^{(l_m)}(\lambda_m).
\end{equation}
The same argument that showed (4.7) implies (4.9) gives in this case
\begin{equation}
\Phi_{v,i,{\bf 0}}(\lambda) = \sum_{l\in L_{v,\hat{\imath}}} f_{v_i}^{(l_i)}(\lambda_i) \prod_{\substack{j=1\\ j\neq i}}^N [v_j]_{l_j}\lambda_j^{v_j+l_j}.
\end{equation}
Since $v$ has minimal negative support, if $l\in L_{v,\hat{\imath}}$ and $v_i\in{\mathbb Z}_{<0}$, then $v_i+l_i\in{\mathbb Z}_{<0}$.  It follows that for $l\in L_{v,\hat{\imath}}$, no $f_{v_i}^{(l_i)}(\lambda_i)$ is given by (5.9), all are given by (5.6)--(5.8).  We can thus rewrite (5.13) as
\begin{multline}
\Phi_{v,i,{\bf 0}}(\lambda) = \sum_{l\in L_{v,\hat{\imath}}}[v]_l\lambda^{v+l}\log^2\lambda_i \\
 + \sum_{l\in L_{v,\hat{\imath}}} \lambda^{v+l}\log\lambda_i\prod_{\substack{j=1\\ j\neq i}}^N [v_j]_{l_j}\cdot 
\begin{cases} 0 & \text{if $l_i=0$,} \\ -2[v_i]_{l_i}m_{l_i,1}(v_i) & \text{if $l_i>0$,} \\ 2s_{-l_i,-l_i-1}(v_i) & \text{if $l_i<0$,} \end{cases} \\
 + \sum_{l\in L_{v,\hat{\imath}}} \lambda^{v+l}\prod_{\substack{j=1\\ j\neq i}}^N [v_j]_{l_j}\cdot 
\begin{cases} 0 & \text{if $l_i=0,-1$,} \\ 2[v_i]_{l_i}m_{l_i,2}(v_i) & \text{if $l_i>0$,} \\ 2s_{-l_i,-l_i-2}(v_i) & \text{if $l_i\leq -2$.} \end{cases}
\end{multline}

As noted in the proof of Theorem 4.11, one has $[v]_l=0$ for $l\in L_{v,\hat{\imath}}\setminus L_v$, so the first sum on the right-hand side of (5.14) can be replaced by a sum over $L_v$.  Equation (5.14) thus simplifies to
\begin{equation}
\Phi_{v,i,{\bf 0}}(\lambda) = F(\lambda)\log^2\lambda_i + 2G_i(\lambda)\log\lambda_i + H_{ii}(\lambda), 
\end{equation}
where $F(\lambda)$ and $G_i(\lambda)$ are given by Theorem 4.11 and where
\begin{equation}
H_{ii}(\lambda) = \sum_{l\in L_{v,\hat{\imath}}} \lambda^{v+l}\prod_{\substack{j=1\\ j\neq i}}^N [v_j]_{l_j}\cdot 
\begin{cases} 0 & \text{if $l_i=0,-1$,} \\ 2[v_i]_{l_i}m_{l_i,2}(v_i) & \text{if $l_i>0$,} \\ 2s_{-l_i,-l_i-2}(v_i) & \text{if $l_i\leq -2$.} \end{cases}
\end{equation}
This gives explicitly the second-order quasisolution of Theorem 5.5(a).

We now consider the assertion of Theorem 5.5(b).  Let $v\in{\mathbb C}^N$ and let $i,j\in\{1,\dots,N\}$, $i\neq j$.  Let $f_{v_i}^{(k)}(t)$ and $f_{v_j}^{(k)}(t)$ be defined by (4.1)--(4.4) and let $f_{v_m}^{(k)}(t)$ be defined by (3.2)--(3.3) for $m\neq i,j$.  For $m=1,\dots,N$ we define the associated generating series
\[ \Phi_{v_m}(\lambda_m,T) = \sum_{k_m\in{\mathbb Z}} f_{v_m}^{(k_m)}(\lambda_m)T^{(v_m+k_m){\bf a}_m} \]
and their product 
\begin{equation}
\Phi_{v,ij}(\lambda,T) = \prod_{m=1}^N \Phi_{v_m}(\lambda_m,T) = \sum_{u\in{\mathbb Z}A} \Phi_{v,ij,u}(\lambda)T^{\beta+u},
\end{equation}
where
\begin{equation}
\Phi_{v,ij,u}(\lambda) = \sum_{\sum k_m{\bf a}_m = u} \prod_{m=1}^N f_{v_m}^{(k_m)}(\lambda_m).
\end{equation}
Proposition 2.11 implies that the $\Phi_{v,ij,u}(\lambda)$ satisfy the box operators.

We consider the case $u={\bf 0}$:
\begin{equation}
\Phi_{v,ij,{\bf 0}}(\lambda) = \sum_{l\in L} \prod_{m=1}^N f_{v_m}^{(l_m)}(\lambda_m).
\end{equation}
Let
\[ L'_{v,\hat{\imath}\hat{\jmath}} = \{l\in L\mid \hat{\imath}\hat{\jmath}\text{-}{\rm nsupp}(v+l)\subseteq \hat{\imath}\hat{\jmath}\text{-}{\rm nsupp}(v)\}. \]
If $l\in L\setminus L'_{v,\hat{\imath}\hat{\jmath}}$, then there exists $m\neq i,j$ such that $v_m+l_m\in{\mathbb Z}_{<0}$ and $v_m\in{\mathbb Z}_{\geq 0}$.  Eq.~(3.5) then implies that $f_{v_m}^{(l_m)}(\lambda_m)=0$, so (5.19) becomes
\begin{equation}
\Phi_{v,ij,{\bf 0}}(\lambda) = \sum_{l\in L'_{v,\hat{\imath}\hat{\jmath}}} \prod_{m=1}^N f_{v_m}^{(l_m)}(\lambda_m).
\end{equation}

Our assumption that $v$ has minimal $\hat{\imath}\hat{\jmath}$-negative support implies that $L'_{v,\hat{\imath}\hat{\jmath}}=L_{v,\hat{\imath}\hat{\jmath}}$.  Furthermore, if $l\in L_{v,\hat{\imath}\hat{\jmath}}$ then there exists no $m\neq i,j$ for which $v_m\in{\mathbb Z}_{<0}$ and $v_m+l_m\in{\mathbb Z}_{\geq 0}$.  It follows that all such $f_{v_m}^{(l_m)}(\lambda_m)$ are given by (3.2), none is given by (3.3), so (5.20) becomes
\begin{equation}
\Phi_{v,ij,{\bf 0}}(\lambda) = \sum_{l\in L_{v,\hat{\imath}\hat{\jmath}}} f_{v_i}^{(l_i)}(\lambda_i)f_{v_j}^{(l_j)}(\lambda_j)\prod_{\substack{m=1\\ m\neq i,j}}^N [v_m]_{l_m}\lambda_m^{v_m+l_m}.
\end{equation}

Now let $l\in L_{v,\hat{\imath}\hat{\jmath}}$.  Since $v$ has minimal $\hat{\imath}$-negative support, if $v_j\in{\mathbb Z}_{<0}$, then $v_j+l_j\in{\mathbb Z}_{<0}$ also.  It follows that $f_{v_j}^{(l_j)}(\lambda_j)$ is always given by (4.1)--(4.3), never by (4.4).  Similarly, since $v$ has minimal $\hat{\jmath}$-negative support, it follows that $f_{v_i}^{(l_i)}(\lambda_i)$ is always given by (4.1)--(4.3), never by (4.4).  We now expand (5.21) by using (4.1)--(4.3) to express $f_{v_i}^{(l_i)}(\lambda_i)$ and $f_{v_j}^{(l_j)}(\lambda_j)$:
\begin{multline}
\Phi_{v,ij,{\bf 0}}(\lambda) = \sum_{l\in L_{v,\hat{\imath}\hat{\jmath}}} [v]_l\lambda^{v+l}\log\lambda_i\log\lambda_j \\
+\sum_{l\in L_{v,\hat{\imath}\hat{\jmath}}} \lambda^{v+l}\log\lambda_j\bigg(\prod_{\substack{m=1\\ m\neq i}}^N [v_m]_{l_m}\bigg)\cdot \begin{cases} 0 & \text{if $l_i=0$,} \\ -[v_i]_{l_i}m_{l_i,1}(v_i) & \text{if $l_i>0$,} \\
s_{-l_i,-l_i-1}(v_i) & \text{if $l_i<0$,} \end{cases} \\
+\sum_{l\in L_{v,\hat{\imath}\hat{\jmath}}} \lambda^{v+l}\log\lambda_i\bigg(\prod_{\substack{m=1\\ m\neq j}}^N [v_m]_{l_m}\bigg)\cdot \begin{cases} 0 & \text{if $l_j=0$,} \\ -[v_j]_{l_j}m_{l_j,1}(v_j) & \text{if $l_j>0$,} \\
s_{-l_j,-l_j-1}(v_j) & \text{if $l_j<0$,} \end{cases} \\
+\sum_{l\in L_{v,\hat{\imath}\hat{\jmath}}}\lambda^{v+l}\bigg(\prod_{\substack{m=1\\ m\neq i,j}}^N [v_m]_{l_m}\bigg) \cdot \left. \begin{cases} 0 & \text{if $l_i=0$,} \\ -[v_i]_{l_i}m_{l_i,1}(v_i) & \text{if $l_i>0$,} \\
s_{-l_i,-l_i-1}(v_i) & \text{if $l_i<0$,} \end{cases} \right\} \cdot\begin{cases} 0 & \text{if $l_j=0$,} \\ -[v_j]_{l_j}m_{l_j,1}(v_j) & \text{if $l_j>0$,} \\
s_{-l_j,-l_j-1}(v_j) & \text{if $l_j<0$.} \end{cases}
\end{multline}

Some of the terms on the right-hand side of (5.22) vanish.  Suppose that $l\in L_{v,\hat{\imath}\hat{\jmath}}$ but $l\not\in L_v$.  Our hypotheses imply that either $v_i+l_i\in{\mathbb Z}_{<0}$ and $v_i\in{\mathbb Z}_{\geq 0}$ or that $v_j+l_j\in{\mathbb Z}_{<0}$ and $v_j\in{\mathbb Z}_{\geq 0}$ (or both), i.e.,
\begin{equation}
\text{either $[v_i]_{l_i}=0$ or $[v_j]_{l_j}=0$ for $l\in L_{v,\hat{\imath}\hat{\jmath}}\setminus L_v$.}
\end{equation}
It follows that the first sum on the right-hand side of (5.22) can be replaced by a sum over $L_v$.  If $l\in L_{v,\hat{\imath}\hat{\jmath}}$ but $l\not\in L_{v,\hat{\jmath}}$, then our hypotheses imply that $v_i+l_i\in{\mathbb Z}_{<0}$ but $v_i\in{\mathbb Z}_{\geq 0}$, hence
\begin{equation}
\text{$[v_i]_{l_i} = 0$ for $l\in L_{v,\hat{\imath}\hat{\jmath}}\setminus L_{v,\hat{\jmath}}$.}
\end{equation}
Similarly,
\begin{equation}
\text{$[v_j]_{l_j} = 0$ for $l\in L_{v,\hat{\imath}\hat{\jmath}}\setminus L_{v,\hat{\imath}}$.}
\end{equation}
It follows that the second sum on the right-hand side of (5.22) can be replaced by a sum over $L_{v,\hat{\imath}}$ and the third sum on the right-hand side of (5.22) can be replaced by a sum over $L_{v,\hat{\jmath}}$.  We therefore get
\begin{equation}
\Phi_{v,ij,{\bf 0}}(\lambda) = F(\lambda)\log\lambda_i\log\lambda_j + G_i(\lambda)\log\lambda_j + G_j(\lambda) \log\lambda_i + H_{ij}(\lambda),
\end{equation}
where $F(\lambda)$, $G_i(\lambda)$, and $G_j(\lambda)$ are given by Theorem 4.11 and where
\begin{multline}
H_{ij}(\lambda) = \sum_{l\in L_{v,\hat{\imath}\hat{\jmath}}} \lambda^{v+l}\prod_{\substack{m=1\\ m\neq i,j}}^N [v_m]_{l_m}\\
 \cdot \left. \begin{cases} 0 & \text{if $l_i=0$,} \\ -[v_i]_{l_i}m_{l_i,1}(v_i) & \text{if $l_i>0$,} \\
s_{-l_i,-l_i-1}(v_i) & \text{if $l_i<0$,} \end{cases} \right\} \cdot\begin{cases} 0 & \text{if $l_j=0$,} \\ -[v_j]_{l_j}m_{l_j,1}(v_j) & \text{if $l_j>0$,} \\
s_{-l_j,-l_j-1}(v_j) & \text{if $l_j<0$.} \end{cases}
\end{multline}
This is the second-order quasisolution of Theorem 5.5(b).

\setcounter{example}{1}
\begin{example} {\rm 
(cont.) Clearly $v$ has minimal $\hat{\imath}\hat{\jmath}$-negative support for all $i,j$.  One checks that for $i=1,\dots,4$ we have
\[ L_{v,\hat{\imath}\hat{5}} = L_{v,\hat{5}} = \{(a,b,a,b,-2a-2b)\mid a,b\in{\mathbb Z}_{\geq 0}\}. \]
We also have
\[ L_{v,\hat{1}\hat{3}} = \{(a,b,a,b,-2a-2b)\mid -a\geq b\geq 0\} \]
and 
\[ L_{v,\hat{2}\hat{4}} = \{(a,b,a,b,-2a-2b)\mid -b\geq a\geq 0\}. \]
The remaining cases are trivial:
\[ L_{v,\hat{1}\hat{2}} = L_{v,\hat{1}\hat{4}} = L_{v,\hat{2}\hat{3}} = L_{v,\hat{3}\hat{4}} = \{(0,0,0,0,0)\}. \]
For $i=1,\dots,4$ we have the second-order quasisolutions
\[ \Phi_{v,ii,{\bf 0}}(\lambda) = \lambda_5\log^2\lambda_i \]
and for $i=5$ we have the second-order quasisolution
\[ \Phi_{v,55,{\bf 0}}(\lambda) = \lambda_5\log^2\lambda_5 + 2G_5(\lambda)\log\lambda_5 + H_{55}(\lambda), \]
where $G_5(\lambda)$ is given by (4.14) and where by (5.16)
\[ H_{55}(\lambda) = \lambda_5\sum_{\substack{a,b=0\\ (a,b)\neq (0,0)}}^\infty \frac{2\cdot(2a+2b-2)!}{a!^2b!^2} \bigg(1-\sum_{i=1}^{2a+2b-2}\frac{1}{i}\bigg)\bigg(\frac{\lambda_1\lambda_3}{\lambda_5^2}\bigg)^a \bigg(\frac{\lambda_2\lambda_4}{\lambda_5^2}\bigg)^b. \]
For $(i,j)\neq(i,5),(1,3),(2,4)$ we have 
\[ \Phi_{v,ij,{\bf 0}}(\lambda) = \lambda_5\log\lambda_i\log\lambda_j. \]
When $(i,j) = (i,5)$, we get
\[ \Phi_{v,i5,{\bf 0}}(\lambda) = \lambda_5\log\lambda_i\log\lambda_5 + G_5(\lambda)\log\lambda_i + H_{i5}(\lambda), \]
where from (5.27) we have for $i=1,3$
\[ H_{i5}(\lambda) = -\lambda_5\sum_{b=0}^\infty\sum_{a=1}^{\infty} \frac{(2a+2b-2)!}{a!^2b!^2}\bigg(1 + \frac{1}{2}+\cdots+\frac{1}{a}\bigg)\bigg(\frac{\lambda_1\lambda_3}{\lambda_5^2}\bigg)^a \bigg(\frac{\lambda_2\lambda_4}{\lambda_5^2}\bigg)^b \]
and for $i=2,4$
\[ H_{i5}(\lambda) = -\lambda_5\sum_{a=0}^\infty\sum_{b=1}^{\infty} \frac{(2a+2b-2)!}{a!^2b!^2}\bigg(1 + \frac{1}{2}+\cdots+\frac{1}{b}\bigg)\bigg(\frac{\lambda_1\lambda_3}{\lambda_5^2}\bigg)^a \bigg(\frac{\lambda_2\lambda_4}{\lambda_5^2}\bigg)^b. \]
Finally, we have
\[ \Phi_{v,13,{\bf 0}}(\lambda)  = \lambda_5\log\lambda_1\log\lambda_3 + H_{13}(\lambda), \]
where
\[ H_{13}(\lambda) = \lambda_5\sum_{b=0}^\infty \sum_{\substack{a=-b\\ a\neq 0}}^{-\infty} \frac{(-a-1)!^2}{b!^2 (-2a-2b+1)!}\bigg(\frac{\lambda_1\lambda_3}{\lambda_5^2}\bigg)^a \bigg(\frac{\lambda_2\lambda_4}{\lambda_5^2}\bigg)^b, \]
and
\[ \Phi_{v,24,{\bf 0}}(\lambda) = \lambda_5\log\lambda_2\log\lambda_4 + H_{24}(\lambda), \]
where
\[ H_{24}(\lambda) = \lambda_5\sum_{a=0}^\infty \sum_{\substack{b=-a\\ b\neq 0}}^{-\infty} \frac{(-b-1)!^2}{a!^2 (-2a-2b+1)!}\bigg(\frac{\lambda_1\lambda_3}{\lambda_5^2}\bigg)^a \bigg(\frac{\lambda_2\lambda_4}{\lambda_5^2}\bigg)^b. \]

We can now apply Proposition 5.3 to get solutions of the $A$-hypergeometric system.  If we take $l=(-1,0,-1,0,2)$ and $l'=(0,1,0,1,-2)$, we get the solution
\begin{multline*}
\lambda_5\log(\lambda_5^2/\lambda_1\lambda_3)\log(\lambda_2\lambda_4/\lambda_5^2) + 2G_5(\lambda) \big(\log(\lambda_2\lambda_4/\lambda_5^2) - \log(\lambda_5^2/\lambda_1\lambda_3)\big) \\
-4H_{55}(\lambda) + 2\sum_{i=1}^4 H_{i5}(\lambda).
\end{multline*}
Note that this solution lies in the Nilsson ring determined by the set of exponents~$L_{v,\hat{5}}$.  But the solutions corresponding to the choices $l=l'=(-1,0,-1,0,2)$ and $l=l'=(0,1,0,1,-2)$ do not lie in any Nilsson ring.  For example, if $l=l'=(-1,0,-1,0,2)$, then the solution involves $G_5$, $H_{15}$, $H_{35}$, and $H_{55}$ (with support~$L_{v,\hat{5}}$) and $H_{13}$ (with support $L_{v,\hat{1}\hat{3}}$).  The total support of the solution is $L_{v,\hat{5}}\cup L_{v,\hat{1}\hat{3}}$, which does not lie in any pointed (i.e., strictly convex) cone.
}
\end{example}

\section{Families of complete intersections}

One can associate an $A$-hypergeometric system to a family of complete intersections in the torus.  For a certain choice of $\beta$, one obtains a system of interest for applications to mirror symmetry.  We describe this system, show that it has a complete set of quasisolutions, and make a conjecture regarding the integrality of the associated mirror maps.  

Consider sets $A_i = \{{\bf a}_0^{(i)},{\bf a}_1^{(i)},\dots,{\bf a}_{N_i}^{(i)}\}\subseteq{\mathbb Z}^n$ for $i=1,\dots,M$.  The elements ${\bf a}_0^{(i)}$ will play a special role in what follows.  Our system will be related to the family of complete intersections in the $n$-torus ${\mathbb T}^n$ over ${\mathbb C}$ defined by the equations
\[ f_{i,\lambda}(x) = \sum_{j=0}^{N_i} \lambda_j^{(i)}x^{{\bf a}_j^{(i)}} = 0\quad\text{for $i=1,\dots,M$.} \]
Define $\hat{\bf a}_j^{(i)}\in{\mathbb Z}^{n+M}$ by
\[\hat{\bf a}_j^{(i)} = ({\bf a}_j^{(i)},0,\dots,0,1,0,\dots,0), \]
where the `1' occurs in the $(n+i)$-th entry.  We consider the $A$-hypergeometric system associated to the set
\[ A = \{ \hat{\bf a}_j^{(i)}\mid i=1,\dots,M,\; j=0,\dots,N_i\}\subseteq{\mathbb Z}^{n+M}. \]
We take $\beta = -\sum_{i=1}^M \hat{\bf a}_0^{(i)}$.  If we define $v=(v_j^{(i)})\in{\mathbb C}^{\sum_{i=1}^M(N_i+1)}$ with
\begin{equation}
v_j^{(i)} = \begin{cases} -1 & \text{if $j=0$,} \\  0 & \text{if $j\neq 0$,} \end{cases}
\end{equation}
then $\sum_{i=1}^M\sum_{j=0}^{N_i} v_j^{(i)}\hat{\bf a}_j^{(i)} = \beta$.  By \cite[Proposition~5.11]{AS}, the vector $v$ has minimal negative support and the corresponding series solution $F(\lambda)$ of the $A$-hypergeometric system with parameter $\beta$ has integer coefficients.  

Note that
\[ L_v = \{l=(l^{(i)}_j)\in L\mid \text{$l^{(i)}_0\leq 0$ for all $i$ and $l_j^{(i)}\geq 0$ for all $j\neq 0$ and all $i$}\}. \]
Since two indices ($i$ and $j$) are needed to describe elements of $L$, we need to modify our earlier notation.  For a vector $z=(z_m^{(k)})\in{\mathbb C}^{\sum_{i=1}^M (N_i+1)}$ we refer to the set
\[ \widehat{((i),j)}\text{-}{\rm nsupp}(z) = \{ ((k),m)\neq ((i),j)\mid \text{$z_m^{(k)}$ is a negative integer}\} \]
as the $\widehat{((i),j)}$-negative support of $z$.  And we denote by $L_{v,\widehat{((i),j)}}$ the set
\[ L_{v,\widehat{((i),j)}} = \{ l\in L \mid \widehat{((i),j)}\text{-}{\rm nsupp}(v+l)=\widehat{((i),j)}\text{-}{\rm nsupp}(v)\}. \]

Define $\delta = \sum_{i=1}^M {\bf a}_0^{(i)}\in{\mathbb Z}^n$, so that
\[ \beta = (-\delta;-1,\dots,-1)\in{\mathbb Z}^{n+M}. \]
Let $\Delta_i\subseteq{\mathbb R}^n$, $i=1,\dots,M$, be the convex hull of the set $A_i$, and let $\Delta = \sum_{i=1}^M\Delta_i$ be their Minkowski sum.  The main result of this section is the following proposition.
\begin{proposition}
Suppose that $\delta$ is the unique interior lattice point of $\Delta$.  Then $v$ has minimal $\widehat{((i),j)}$-negative support for all $i,j$, hence the $A$-hypergeometric system with parameter $\beta$ has a complete set of quasisolutions.  Furthermore, the total support $\bigcup_{i=1}^M \bigcup_{j=0}^{N_i} L_{v,\widehat{((i),j)}}$ lies in a pointed cone, so the quasisolutions all lie in a common Nilsson ring.
\end{proposition}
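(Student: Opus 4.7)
The plan is to split Proposition 6.2 into three parts: (i) $v$ has minimal $\widehat{((i),j)}$-negative support for every pair $(i,j)$; (ii) a complete set of quasisolutions then exists by Corollary 1.7; and (iii) the total support $U := \bigcup_{i,j} L_{v,\widehat{((i),j)}}$ lies in a pointed cone. I expect part (iii) to be the main obstacle.

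For (i), the case $j = 0$ is automatic: the lattice condition $\sum_j l_j^{(k)} = 0$ together with $l_m^{(k)} \geq 0$ for $m \neq 0$ forces $l_0^{(k)} \leq 0$, precluding any proper reduction of $\widehat{((i),0)}$-${\rm nsupp}(v)$. For $j \neq 0$, suppose $l \in L$ violates minimality; a sign analysis forces $l_0^{(i)} \geq 1$, $l_j^{(i)} \leq -1$, $l_0^{(k)} \leq 0$ for $k \neq i$, and $l_m^{(k)} \geq 0$ for $(k,m) \neq (i,j)$ with $m \neq 0$. I would then show that
\[
\delta' := \delta + {\bf a}_j^{(i)} - {\bf a}_0^{(i)} = \sum_{k \neq i} {\bf a}_0^{(k)} + {\bf a}_j^{(i)}
\]
is an interior lattice point of $\Delta$ distinct from $\delta$, contradicting uniqueness. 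Suppose some linear functional $\phi$ on ${\mathbb R}^n$ (with $\phi|_\Delta$ non-constant) attains $\phi(\delta') = \min_{x \in \Delta} \phi(x) = \sum_k c_k$, where $c_k := \min_m \phi({\bf a}_m^{(k)})$. Then $\phi({\bf a}_0^{(k)}) = c_k$ for $k \neq i$ and $\phi({\bf a}_j^{(i)}) = c_i$, while interiority of $\delta$ forces $\phi({\bf a}_0^{(i)}) > c_i$. Applying $\phi$ to $\sum_{k,m} l_m^{(k)} {\bf a}_m^{(k)} = 0$ and estimating block-by-block, the $k \neq i$ blocks contribute nonnegatively and the $k = i$ block contributes strictly positively (using $l_0^{(i)} \geq 1$ with $\phi({\bf a}_0^{(i)}) > c_i$), yielding $0 > 0$, a contradiction.

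For (iii), the strategy is to show $\mathrm{cone}(U) \cap (-\mathrm{cone}(U)) = \{0\}$. The functional $\phi_0(l) := -\sum_k l_0^{(k)}$ is nonnegative on $U$, so for $l \in \mathrm{cone}(U) \cap (-\mathrm{cone}(U))$ every generator $l^{(a)} \in U$ appearing with positive coefficient in expansions of $l$ or $-l$ must satisfy $(l^{(a)})_0^{(k)} = 0$ for all $k$. A per-block analysis then shows each such $l^{(a)}$ is supported in a single block $i_a$, where it is a lattice relation on $A_{i_a} \setminus \{{\bf a}_0^{(i_a)}\}$ with exactly one negative entry. Since the blocks occupy disjoint coordinates, it suffices to prove that for each $i$ the cone $\mathrm{cone}(M_i)$ generated by such ``one-exception'' minor relations is pointed. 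Suppose $\sum_a c_a l_a = 0$ with $c_a > 0$ and $l_a \in M_i \setminus \{0\}$, and let $S$ be the set of exception positions $\{m_a\}$. Coordinate-wise cancellation forces every generator to be supported in $S$, and the lattice condition then expresses each ${\bf a}_{m_a}^{(i)}$ as a convex combination of $\{{\bf a}_m^{(i)} : m \in S,\ m \neq m_a\}$. But then every point of the finite set $\{{\bf a}_m^{(i)} : m \in S\}$ is a convex combination of the others, contradicting the existence of extreme points of its convex hull. Hence $\mathrm{cone}(M_i)$ is pointed, $\mathrm{cone}(U)$ is pointed, and the quasisolutions all lie in a common Nilsson ring.
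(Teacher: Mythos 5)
Your proposal is correct and reaches the same conclusions as the paper, but via a noticeably streamlined route in parts (i) and (iii).

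For part (i) with $j\neq 0$, you argue directly inside $\Delta$: if a bad $l$ existed, then for any nonconstant $\phi$ supporting $\Delta$ at $\delta'=\sum_{k\neq i}{\bf a}_0^{(k)}+{\bf a}_j^{(i)}$ one has $\phi({\bf a}_0^{(k)})=c_k$ for $k\neq i$, $\phi({\bf a}_j^{(i)})=c_i$, and (by interiority of $\delta$) $\phi({\bf a}_0^{(i)})>c_i$, so applying $\phi$ to $\sum_{k,m}l_m^{(k)}{\bf a}_m^{(k)}=0$ and using $l_0^{(i)}\geq 1$ gives a strict inequality against $0$. The paper instead passes to the cone $C(\hat\Delta)$ and the auxiliary polytope $\Gamma$, proves Lemma 6.4 to translate interior lattice points of $\Delta$ into interior lattice points of $\Gamma$, and then runs a face argument upstairs. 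The two are the same convexity computation; your version avoids $\Gamma$ and Lemma 6.4, at the cost of invoking explicitly the Minkowski-sum identity $\min_\Delta\phi=\sum_k\min_{\Delta_k}\phi$. (The $j=0$ case is handled identically via equation (6.7).)

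For part (iii), you use the functional $\phi_0(l)=-\sum_k l_0^{(k)}\geq 0$ on $U$ to force every generator occurring in a vanishing nonnegative combination to satisfy $l_0^{(k)}=0$ for all $k$; then (6.7) together with the inequalities defining $L_{v,\widehat{((i),j)}}$ makes each such generator supported in a single block $i$ with exactly one negative coordinate. This is precisely the content of the paper's Lemma 6.13, but obtained more directly because you do not first bundle the summands into a single $\xi(i,j)$ per index pair. Your per-block extreme-point argument on $\mathrm{conv}\{{\bf a}_m^{(i)}:m\in S\}$ is then the same in substance as the paper's Corollary 6.22 followed by the vertex argument on $A'$; both hinge on the elements of $A_i$ being distinct, which is built in. A small bonus of your organization is that it subsumes the paper's separate preliminary claim that each individual $L_{v,\widehat{((i),j)}}$ is pointed; you get that for free from the pointedness of $\mathrm{cone}(U)$.
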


Let $F(\lambda)\log\lambda_j^{(i)} + G^{(i)}_j(\lambda)$ be the quasisolution of Proposition 6.2 corresponding to the variable $\lambda_j^{(i)}$.  Since this quasisolution lies in a Nilsson ring and the coefficient of the term of the series $F(\lambda)$ (resp.\ $G^{(i)}_j(\lambda)$) corresponding to ${\bf 0}\in L$ is $1$ (resp.\ $0$), the series
\[ q_j^{(i)}(\lambda) = \lambda_j^{(i)}\exp\big(G^{(i)}_j(\lambda)/F(\lambda)\big) \]
is well defined and has support in the pointed cone of Proposition~6.2.
\begin{conjecture}
If $\delta$ is the unique interior lattice point of $\Delta$, then the series $q^{(i)}_j(\lambda)$ has integer coefficients.
\end{conjecture}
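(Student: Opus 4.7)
The strategy I would pursue is a $p$-adic analytic one, modeled on the proofs of the classical mirror-map integrality theorem (Lian--Yau for the quintic, and in greater generality Kontsevich--Schwarz--Vologodsky and Krattenthaler--Rivoal). First, integrality over $\mathbb{Z}$ reduces to $p$-integrality for every prime $p$. Since $F(\lambda)$ has constant term $1$ and integer coefficients by \cite[Proposition 5.11]{AS}, the inverse $1/F(\lambda)$ is a well-defined element of $\mathbb{Z}_p[[\,\cdot\,]]$, so the whole question becomes whether
\[ \frac{G^{(i)}_j(\lambda)}{F(\lambda)} \in p\,\mathbb{Z}_p\bigl[\!\bigl[\text{exponents in the Nilsson cone}\bigr]\!\bigr], \]
which by the $p$-adic exponential would give $q^{(i)}_j(\lambda)/\lambda^{(i)}_j \in 1+p\,\mathbb{Z}_p[[\,\cdot\,]]$ and in particular $p$-integrality. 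This is the standard reformulation; the merit of the interior-lattice-point hypothesis is precisely that it places all the series involved in a common Nilsson ring by Proposition 6.2, so the $p$-adic manipulations make sense.

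The heart of the proof would then be to establish a Dwork-type Frobenius congruence for $F$. Writing $F(\lambda)=\sum_{l\in L_v}A_l\lambda^{v+l}$ with $A_l=[v]_l\in\mathbb{Z}$, the goal is a relation of the shape
\[ F(\lambda)^{\,p}\,\cdot\,\mathcal{F}(\lambda) \;\equiv\; F(\lambda^p) \pmod{p^s\text{ in the Nilsson ring}}, \]
for a suitable formal Frobenius $\lambda\mapsto\lambda^p$ and a unit correction $\mathcal{F}(\lambda)$. Differentiating such a congruence logarithmically with respect to $\log\lambda^{(i)}_j$ and using the explicit formula of Theorem 4.11 should produce exactly the numerators $G^{(i)}_j(\lambda)$, because the harmonic-type factors $-[v_j^{(i)}]_{l_j^{(i)}}m_{l_j^{(i)},1}(0)$ (for $j\neq 0$) and $s_{-l_0^{(i)},-l_0^{(i)}-1}(-1)$ (for $j=0$) that appear in $G^{(i)}_j$ are precisely the traces left behind by the operator $\lambda_j^{(i)}\partial/\partial\lambda_j^{(i)}$ acting on the $p$-adic logarithm of $F$. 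One then has to check, via congruences of Wolstenholme/Dwork type for partial harmonic sums and for multinomial coefficients, that the resulting series $G^{(i)}_j/F$ lies in $p\mathbb{Z}_p[[\,\cdot\,]]$.

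The main obstacle is proving the Dwork congruence for $F$ itself in the complete-intersection setting, which is exactly where the hypothesis that $\delta$ is the unique interior lattice point of $\Delta=\sum_i\Delta_i$ must enter decisively. In the single-polytope (hypersurface) case this reduces to Dwork's theorem on $A$-hypergeometric factorial ratios, but here the structure is multivariate and mixed across the $M$ polytopes $\Delta_i$. I would attempt to reduce this to a statement about the integrality and $p$-divisibility of the ratio
\[ \frac{(\sum_i(-1-l^{(i)}_0-1)!)\prod_{i,j\neq 0}(1)_{l^{(i)}_j}}{\text{analogous product for }pl} \]
using Jacobi-sum--type interpretations of the coefficients $[v]_l$, and to invoke the interior-lattice-point hypothesis to guarantee that every admissible $l$ decomposes as $p$ times an element of $L_v$ plus a controllable remainder; this is the place where a naive approach is most likely to fail and where some new combinatorial input (perhaps via a toric/Stanley--Reisner ring attached to $\Delta$) appears necessary to push the Kontsevich--Schwarz--Vologodsky/Delaygue machinery through in the complete-intersection case.
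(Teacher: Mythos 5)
This statement is Conjecture~6.3, and the paper offers no proof of it: it is an open conjecture. The authors themselves record that the integrality is known in Example~3 (the $M=1$, $N_1=6$ case) by Krattenthaler--Rivoal, that it ``may follow'' from Delaygue's work for $j=0,\dots,3$ in Example~4, and that for $j=4$ in Example~4 it ``seems to be an open question.'' So there is no paper proof for your proposal to be compared against; what you have written is a plan of attack, not a proof, and you yourself say as much in the last sentence.

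Beyond that, your reduction step contains a genuine error. You assert that the problem ``becomes whether $G^{(i)}_j(\lambda)/F(\lambda)$ lies in $p\,\mathbb{Z}_p[[\cdots]]$, which by the $p$-adic exponential would give $p$-integrality.'' This condition is much stronger than what is needed, and it is \emph{false} already in the classical one-variable examples your strategy is modeled on: for the quintic, with $F(x)=\sum_{n\ge 0}\frac{(5n)!}{n!^5}x^n$ and the usual logarithmic companion $G$, the $x$-coefficient of $G/F$ is a rational integer not divisible by all primes (for instance it is a unit mod $3$), yet the mirror map $x\exp(G/F)$ is integral. The correct reformulation is the Dieudonn\'e--Dwork criterion: for a power series $g$ with no constant term, $\exp(g)\in 1+\text{(maximal ideal)}\,\mathbb{Z}_p[[\cdots]]$ if and only if $g(\lambda^p)-p\,g(\lambda)$ lies in $p$ times the integral Nilsson ring, where $\lambda\mapsto\lambda^p$ is a Frobenius lift. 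Your later mention of ``a Dwork-type Frobenius congruence'' and ``differentiating logarithmically'' is in the right spirit, but as written your opening reduction would immediately derail the argument if carried out literally.

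Finally, even after repairing the criterion, the load-bearing step --- a Dwork-type congruence for $F$ (and its Frobenius structure) in the complete-intersection setting, exploiting that $\delta$ is the unique interior lattice point of $\Delta=\sum_i\Delta_i$ --- is not supplied; you explicitly flag it as the place where ``a naive approach is most likely to fail'' and ``new combinatorial input \dots appears necessary.'' That is an honest assessment: it is precisely the gap that makes this a conjecture in the paper rather than a theorem.
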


We give some examples related to this conjecture after the proof of Proposition ~6.2, which will require several steps.

Let $\hat{\Delta}_i\subseteq{\mathbb R}^{n+M}$ be the convex hull of the set $\hat{A}_i = \{\hat{\bf a}_0^{(i)}, \dots,\hat{\bf a}_{N_i}^{(i)}\}$ and let $\hat{\Delta}\subseteq{\mathbb R}^{n+M}$ be the convex hull of the set $A=\bigcup_{i=1}^M \hat{A}_i$.  Let $C(\hat{\Delta})\subseteq{\mathbb R}^{n+M}$ be the real cone generated by $\hat{\Delta}$.  Let $x_1,\dots,x_n,y_1,\dots,y_M$ be the coordinate functions on ${\mathbb R}^{n+M}$.  We shall be interested in the set $\Gamma$ defined by
\[ \Gamma = C(\hat{\Delta})\cap\{(x;y)\in{\mathbb R}^{n+M}\mid y_1+\cdots+y_M = M\}. \]
Note that $\Gamma$ is the convex hull of $\bigcup_{i=1}^M M\hat{\Delta}_i$, where $M\hat{\Delta}_i$ denotes the dilation of~$\hat{\Delta}_i$ by the factor $M$.  In particular, a point $(\epsilon;1,\dots,1)$ lies in $\Gamma$ if and only if $\epsilon$ lies in~$\Delta$.  Set
\[ \gamma = \sum_{i=1}^M \hat{\bf a}_0^{(i)} = (\delta;1,\dots,1)\in\Gamma. \]
\begin{lemma}
The point $\delta$ is the unique interior lattice point of $\Delta$ if and only if $\gamma$ is the unique interior lattice point of $\Gamma$.
\end{lemma}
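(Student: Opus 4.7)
The plan is to set up a bijection between interior lattice points of $\Delta$ and interior lattice points of $\Gamma$ via the affine embedding $\epsilon \mapsto (\epsilon; 1, \ldots, 1)$, under which $\delta \leftrightarrow \gamma$; the lemma then follows immediately. Since $\Gamma$ lies in the hyperplane $H = \{(x;y) \in {\mathbb R}^{n+M} : y_1+\cdots+y_M = M\}$, ``interior'' always means relative interior in $H$; a direct dimension count (using $\dim\Delta = n$, implicit in $\Delta$ having an interior point) shows $\Gamma$ is full-dimensional in $H$, which I will use below.

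First I would pin down the shape of interior lattice points of $\Gamma$. Every vertex of $\Gamma$ is a rescaling $M\hat{\bf a}_j^{(i)} = (M{\bf a}_j^{(i)};\,Me_i)$, so each coordinate $y_i$ takes only the values $0$ and $M$ on the vertex set, and hence $0 \leq y_i \leq M$ on $\Gamma$. The face $\{y_i = 0\} \cap \Gamma$ lies in $\partial\Gamma$, so an interior lattice point must have $y_i \geq 1$ for every $i$, and together with $\sum y_i = M$ this forces $y_i = 1$ for all $i$. Hence every interior lattice point of $\Gamma$ has the form $(\epsilon; 1, \ldots, 1)$ for some $\epsilon \in {\mathbb Z}^n$, and such a point lies in $\Gamma$ exactly when $\epsilon \in \Delta$, as noted in the text.

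Next I would show that $(\epsilon; 1, \ldots, 1)$ is interior to $\Gamma$ iff $\epsilon$ is interior to $\Delta$. If $\epsilon \in \partial\Delta$, choose $w \in {\mathbb R}^n$ non-constant on $\Delta$ with $\alpha := \langle w, \epsilon\rangle = \min_\Delta \langle w, \cdot\rangle$, and set $\alpha_i = \min_{\Delta_i} \langle w, \cdot\rangle$, so that $\alpha = \sum_i \alpha_i$ by the Minkowski-sum identity. Then $\ell(x; y) := \langle w, x\rangle - \sum_i \alpha_i y_i$ is nonnegative at every generator $M({\bf a}; e_i)$ of $\Gamma$, hence on $\Gamma$; it vanishes at $(\epsilon; 1, \ldots, 1)$ and is not identically zero on $\Gamma$ (otherwise $\langle w, \cdot\rangle$ would be constant on each $\Delta_i$ and hence on $\Delta$), so $(\epsilon; 1, \ldots, 1) \in \partial\Gamma$. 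Conversely, if $(\delta; 1, \ldots, 1) \in \partial\Gamma$, choose a supporting functional $\ell(x;y) = \langle w_x, x\rangle + \langle w_y, y\rangle$ for $\Gamma$ not constant on $H$ (equivalently, $(w_x, w_y)$ not proportional to $(0, 1, \ldots, 1)$) attaining its minimum $c$ there. Minimizing $\ell$ on each $M\hat{\Delta}_i$ gives $\alpha_i + (w_y)_i \geq c/M$ with $\alpha_i := \min_{\Delta_i} \langle w_x, \cdot\rangle$; summing over $i$ and comparing with the equality $\langle w_x, \delta\rangle + \sum_i (w_y)_i = c$ yields $\sum_i \alpha_i \geq \langle w_x, \delta\rangle$, which combined with the Minkowski identity $\min_\Delta \langle w_x, \cdot\rangle = \sum_i \alpha_i \leq \langle w_x, \delta\rangle$ forces equality, so $\delta$ minimizes $\langle w_x, \cdot\rangle$ on $\Delta$.

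The main obstacle is ruling out the degenerate case in which $\langle w_x, \cdot\rangle$ is constant on $\Delta$, since only if it is non-constant does $\delta$ lie on a proper face of $\Delta$. If it is constant on $\Delta$, then by the Minkowski decomposition it is constant on each $\Delta_i$; writing a general point of $\Gamma$ as a convex combination $M\sum_i \mu_i({\bf a}_i; e_i)$ of the generators (with ${\bf a}_i \in \Delta_i$, $\sum \mu_i = 1$), one computes $\ell = M\sum_i \mu_i(\alpha_i + (w_y)_i)$, and the bound $\ell \geq c$ together with equality $\sum_i (\alpha_i + (w_y)_i) = c$ at $\gamma$ (where $\mu_i = 1/M$) forces all $\alpha_i + (w_y)_i$ to equal $c/M$. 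Then $\ell \equiv c$ on $\Gamma$ and hence on $H$ by full-dimensionality of $\Gamma$ in $H$, forcing $(w_x, w_y)$ to be proportional to $(0, 1, \ldots, 1)$, a contradiction. Thus $\langle w_x, \cdot\rangle$ cuts a proper face of $\Delta$ through $\delta$, so $\delta \in \partial\Delta$, completing the bijection and the proof of the lemma.
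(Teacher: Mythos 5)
Your proof is correct and follows essentially the same approach as the paper: reduce to showing $\epsilon\mapsto(\epsilon;1,\dots,1)$ is a bijection on interior lattice points, and transfer supporting linear functionals back and forth via the Minkowski decomposition $\Delta=\sum_i\Delta_i$ (equivalently, the identity $\min_\Delta\langle w,\cdot\rangle=\sum_i\min_{\Delta_i}\langle w,\cdot\rangle$). The one small difference is that the paper sidesteps your final degenerate-case paragraph by choosing the supporting form $h'$ for $\Gamma$ so that it is strictly negative somewhere on one of the $M\hat\Delta_i$, which forces $h$ to be non-constant on $\Delta$ automatically; your explicit argument ruling out $\langle w_x,\cdot\rangle$ constant on $\Delta$ accomplishes the same thing.
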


\begin{proof}
The result is clear in the case $M=1$ since one has then $x\in\Delta$ if and only if $(x;1)\in\Gamma$.  So suppose that $M>1$.  In that case the hyperplanes $y_i=0$ are hyperplanes of support of $C(\hat{\Delta})$, hence the interior lattice points of $\Gamma$ must be of the form $(\epsilon;1,\dots,1)$, where $\epsilon$ is a lattice point of $\Delta$.

Suppose  that $\epsilon$ is a boundary point of $\Delta$.  Then there exists a linear form $h$ on~${\mathbb R}^n$, not constant on $\Delta$, such that
\begin{equation}
h(x)\leq h(\epsilon)\quad\text{for all $x\in\Delta$.} 
\end{equation}
Write $\epsilon = \sum_{i=1}^M {\bf b}_i$, where ${\bf b}_i\in\Delta_i$ for $i=1,\dots,M$.  We claim that $h$ assumes its maximum value on $\Delta_i$ at the point ${\bf b}_i$.  To see this, suppose that $h$ assumes its maximum on $\Delta_i$ at a point ${\bf b}'_i$ and take $x=\sum_{i=1}^M {\bf b}'_i$ in (6.5).  This gives
\[ \sum_{i=1}^M h({\bf b}'_i)\leq\sum_{i=1}^M h({\bf b}_i), \]
so $h({\bf b}_i) = h({\bf b}'_i)$ for all $i$.  Let $h'$ be the linear form on ${\mathbb R}^{n+M}$ defined by $h'(x;y) = h(x)-\sum_{i=1}^M h({\bf b}_i)y_i$.  Let $(x';y')\in M\hat{\Delta}_i$.  Then $(x';y') = M\cdot(x;y)$ with $(x;y)\in\hat{\Delta}_i$.  We thus have
\[ h'(x';y') = M\cdot h'(x;y) = h(x)-h({\bf b}_i)\leq 0. \]
Since $h'$ is nonpositive on each $M\hat{\Delta}_i$, it is nonpositive on $\Gamma$, and it is nonconstant on $\Gamma$ since $h$ is nonconstant on $\Delta$.  Furthermore,
\[ h'(\epsilon;1,\dots,1) = h(\epsilon)-\sum_{i=1}^M h({\bf b}_i), \]
which shows that $(\epsilon;1,\dots,1)$ is a boundary point of $\Gamma$.

Conversely, let $\epsilon\in\Delta$ and suppose that $(\epsilon;1,\dots,1)$ is a boundary point of~$\Gamma$.  Then there exists a linear form $h'(x;y)$ on ${\mathbb R}^{n+M}$ such that (i)~$h'(\epsilon;1,\dots,1) = 0$, (ii)~$h'$ is nonpositive on all $M\hat{\Delta}_i$, and (iii)~$h'$ assumes a negative value on some~$M\hat{\Delta}_i$.  Write
\[ h'(x;y) = h(x) - \sum_{i=1}^M b_iy_i, \]
where $h$ is a linear form on ${\mathbb R}^n$.  Then (i)~$h(\epsilon) = \sum_{i=1}^M b_i$, (ii)~$h(z_i)\leq b_i$ for all $z_i\in\Delta_i$ and (iii)~there exists $i_0$ and $z\in\Delta_{i_0}$ such that $h(z)<b_{i_0}$.  This implies that $h(x)\leq\sum_{i=1}^M b_i$ for all $x\in\Delta$ and that $h$ assumes a value $<\sum_{i=1}^M b_i$ on $\Delta$.  It follows that $\epsilon$ is a boundary point of $\Delta$.  
\end{proof}

\begin{proof}[Proof of Proposition $6.2$]
We first show that $v$ has minimal $\widehat{((i),j)}$-negative support for all $i,j$ by carrying out the argument for two representative cases, namely, $i=1$, $j=0$ and $i=1$, $j=1$.  We first observe that for all $l=(l^{(i)}_j)\in L$ we have the relation
\begin{equation}
\sum_{i=1}^M \sum_{j=0}^{N_i} l_j^{(i)}\hat{\bf a}_j^{(i)} = {\bf 0}.
\end{equation}
Furthermore, for $1\leq I\leq M$, the $(n+I)$-th coordinate of $\hat{\bf a}_j^{(i)}$ equals 1 if $i=I$ and equals 0 if $i\neq I$, so we also have the relation
\begin{equation}
\sum_{j=0}^{N_I} l^{(I)}_j = 0.
\end{equation}

Take $i=1$ and $j=0$.  To show that $v$ has minimal $\widehat{((1),0)}$-negative support, we need to show there is no $l=(l_j^{(i)})\in L$ such that $l_j^{(i)}\geq 0$ for $i=1,\dots,M$ and $j=1,\dots,N_i$ and such that $l_0^{(I)}\geq 1$ for some $I\neq 1$.  Equation (6.7) implies that there is no such $l$, hence $v$ has minimal $\widehat{((1),0)}$-negative support.

Now take $i=1$ and $j=1$.  To show that $v$ has minimal $\widehat{((1),1)}$-negative support, we need to show there is no $l=(l_j^{(i)})\in L$ such that $l_j^{(i)}\geq 0$ for $i=1,\dots,M$ and $j=1,\dots,N_i$ except for $i=j=1$ and such that $l_0^{(I)}\geq 1$ for some $I$.  Suppose such an $l$ existed.  We prove first that $I=1$.  Since we are assuming $l_j^{(I)}\geq 0$ for all $j\neq 0$ if $I\geq 2$, Equation (6.7) implies that we cannot have $l_0^{(I)}\geq 1$ for $I\geq 2$.  It follows that if such an $l$ existed, it would have the properties $l_0^{(1)}\geq 1$, $l_1^{(1)}<0$, $l_0^{(i)}\leq 0$ for~$i\geq 2$, and $l_j^{(i)}\geq 0$ for all other $i,j$.  

Rearranging (6.6) gives the relation
\begin{equation}
-l_1^{(1)}\hat{\bf a}_1^{(1)} - \sum_{i=2}^M l_0^{(i)}\hat{\bf a}_0^{(i)} = l_0^{(1)}\hat{\bf a}_0^{(1)} + \sum_{j=2}^{N_1} l_j^{(1)}\hat{\bf a}_j^{(1)} + \sum_{i=2}^M\sum_{j=1}^{N_i} l_j^{(i)}\hat{\bf a}_j^{(i)}.
\end{equation}
Since the coefficients of the elements of $A$ in this equation are nonnegative integers, both sides represent a lattice point in $C(\hat{\Delta})$.  By Lemma 6.4, $\gamma = \sum_{i=1}^M \hat{\bf a}_0^{(i)}$ is the unique interior lattice point of $\Gamma$, so the sum $\hat{\bf a}_1^{(1)} + \sum_{i=2}^M \hat{\bf a}_0^{(i)}$ must lie on some codimension-one face of $\Gamma$.  But if a nonnegative linear combination of vectors in a cone lies on a face of that cone, then each of those vectors must lie on that face.  It follows that the vectors $\hat{\bf a}_1^{(1)}$ and $\{\hat{\bf a}_0^{(i)}\}_{i=2}^M$ all lie on the same codimension-one face of~$C(\hat{\Delta})$, hence the left-hand side of (6.8) lies on that face.  The same reasoning applied to the right-hand side of (6.8) then shows that $\hat{\bf a}_0^{(1)}$ lies on that face also.  But this implies that $\gamma$ lies on a codimension-one face of $C(\hat{\Delta})$, contradicting Lemma~6.4.  Thus $v$ has minimal $\widehat{((1),1)}$-negative support.

As a first step towards proving the second assertion of Proposition 6.2, we show that each $L_{v,\widehat{((i),j)}}$ lies in a pointed cone.  If $l=(l_m^{(k)})\in L_{v,\widehat{((i),0)}}$, then $l_m^{(i)}\geq 0$ for all $m\geq 1$, hence (6.7) implies that $l_0^{(i)}\leq 0$.  So for $i=1,\dots,M$ we have
\begin{equation}
L_{v,\widehat{((i),0)}} = L_v = \{l=(l_m^{(k)})\in L\mid \text{$l_0^{(k)}\leq 0$, $l_m^{(k)}\geq 0$ for all $k$ and all $m\geq 1$}\}.
\end{equation}
For $i=1,\dots,M$ and $j=1,\dots,N_i$ we have from the definition
\begin{equation}
L_{v,\widehat{((i),j)}} =  \{l=(l_m^{(k)})\in L\mid \text{$l_0^{(k)}\leq 0$, $l_m^{(k)}\geq 0$ for all $(k,m)\neq (i,j)$}\}.
\end{equation}

To say that $L_{v,\widehat{((i),j)}}$ lies in a pointed cone is equivalent to saying that any expression of ${\bf 0}$ as a linear combination with nonnegative integer coefficients of elements of $L_{v,\widehat{((i),j)}}$ is trivial.  Fix $i,j$, let $\{\xi(h) = (\xi(h)_m^{(k)})\}_{h=1}^H$ be a subset of~$L_{v,\widehat{((i),j)}}$ and let $\{b(h)\}_{h=1}^H$ be positive integers.  We need to show that if 
\begin{equation}
\sum_{h=1}^H b(h)\xi(h) = {\bf 0},
\end{equation}
then $\xi(h) = {\bf 0}$ for all $h$.  By (6.9) and (6.10) we have $\xi(h)_0^{(k)}\leq 0$ for all $h,k$, so~(6.11) implies that in fact $\xi(h)_0^{(k)}=0$ for all $h,k$.  If $j=0$ we have by (6.9) that $\xi(h)_m^{(k)}\geq 0$ for $m\geq 1$, so again (6.11) implies that $\xi(h)_m^{(k)}=0$ for all $h,k$ and all~$m\geq 1$; we conclude that $\xi(h)={\bf 0}$ for all $h$ when $j=0$.  

Now suppose that $j\geq 1$.  Then for $m\geq 1$ and $(k,m)\neq(i,j)$ we have by~(6.10) that $\xi(h)_m^{(k)}\geq 0$, so (6.11) implies that $\xi(h)_m^{(k)}=0$ for all $h$, $(k,m)\neq(i,j)$, and $m\geq 1$.  Since $\xi(h)\in L$, Equation (6.7) implies
\[ \sum_{m=0}^{N_i} \xi(h)_m^{(i)} = 0. \]
We have already shown that $\xi(h)_m^{(i)}=0$ for all $m\neq j$, so this equation shows that $\xi(h)_j^{(i)}=0$ also.  We conclude that $\xi(h) = {\bf 0}$ for all $h$ when $j\geq 1$, thus each $L_{v,\widehat{((i),j)}}$ lies in a pointed cone.  

We now proceed to show that $\bigcup_{i,j} L_{v,\widehat{((i),j)}}$ lies in a pointed cone.  We need to show that the only linear combination of elements of $\bigcup_{i,j} L_{v,\widehat{((i),j)}}$ with nonnegative integer coefficients that equals zero is the trivial one.  Since the sets $L_{v,\widehat{((i),j)}}$ are closed under taking linear combinations with nonnegative integer coefficients, we can group the terms of the linear combination coming from the same $L_{v,\widehat{((i),j)}}$ together into an element $\xi(i,j)\in L_{v,\widehat{((i),j)}}$, giving the equation
\begin{equation}
\sum_{i,j} \xi(i,j) = {\bf 0}. 
\end{equation}
It suffices to show that $\xi(i,j)={\bf 0}$ for all $i,j$:  since we have just proved that $L_{v,\widehat{((i),j)}}$ lies in a pointed cone, the vanishing of $\xi(i,j)$ implies that the expression of $\xi(i,j)$ as a nonnegative linear combination of elements of $L_{v,\widehat{((i),j)}}$ is trivial.

\begin{lemma}  
If $\xi(i,j)\neq{\bf 0}$, then
\begin{align}
\xi(i,j)^{(k)}_m &= 0\quad\text{if $k\neq i$,} \\
\xi(i,j)^{(i)}_0 &= 0, \\
\xi(i,j)^{(i)}_m &\geq 0\quad\text{if $m\neq j$,} \\
\xi(i,j)^{(i)}_j &< 0.
\end{align}
In particular, we must have $j\geq 1$.
\end{lemma}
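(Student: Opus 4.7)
The plan is to unpack Equation~(6.12) coordinate by coordinate and combine the sign constraints from (6.9) and (6.10) with the linear relation (6.7) applied to each individual $\xi(i,j)$. The whole argument will be routine bookkeeping; the task is to kill coordinates in the correct order.

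First I would read off the $((k),0)$-entry of (6.12) for each $k$: the equation $\sum_{(i,j)} \xi(i,j)^{(k)}_0 = 0$, combined with the uniform inequality $\xi(i,j)^{(k)}_0 \le 0$ from (6.9)/(6.10), forces every $\xi(i,j)^{(k)}_0$ to vanish. This already yields (6.15) together with the $m=0$ case of (6.14). Next I would dispatch the case $j=0$: the membership $\xi(i,0)\in L_v$ gives $\xi(i,0)^{(k)}_m\ge 0$ for all $k$ and all $m\ge 1$, so the relation (6.7) at each $k$ (with the $m=0$ term already known to be zero) forces the remaining coordinates to vanish. Hence $\xi(i,0)={\bf 0}$, which proves the final assertion $j\ge 1$ whenever $\xi(i,j)\ne{\bf 0}$.

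Now fix $(i,j)=(I,M)$ with $M\ge 1$ and $\xi(I,M)\ne{\bf 0}$. From (6.10), $\xi(I,M)^{(k)}_m\ge 0$ whenever $(k,m)\ne(I,M)$ and $m\ge 1$. For $k\ne I$ this covers every $m\ge 1$, so (6.7) at $k$ forces all such coordinates to zero, completing (6.14); for $k=I$ and $m\ne M$ the same inequality delivers (6.16) (the entry $m=0$ having been handled by (6.15)). Finally, (6.7) at $k=I$ rewrites as
\[ \xi(I,M)^{(I)}_M = -\sum_{\substack{m\ge 1 \\ m\ne M}} \xi(I,M)^{(I)}_m \le 0, \]
and if this were zero then every coordinate of $\xi(I,M)$ would already be zero, contradicting $\xi(I,M)\ne{\bf 0}$; hence (6.17). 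The only mild obstacle is enforcing the right order of deductions, so that all coordinates outside the distinguished position $((I),M)$ are pinned down before the strict inequality (6.17) is extracted from the nontriviality of $\xi(I,M)$.
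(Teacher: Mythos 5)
Your proof is correct and follows essentially the same route as the paper: kill the $m=0$ coordinates via (6.12) and the sign constraints, use (6.7) at each $k$ to dispose of all $k\neq i$ coordinates and obtain (6.16), then extract (6.17) from nontriviality. The only cosmetic difference is that you handle $j=0$ as a separate case at the start, whereas the paper derives $j\geq 1$ at the end from the incompatibility of (6.15) and (6.17); also note that your temporary notation $(I,M)$ collides with the paper's reserved use of $M$ for the number of polynomial families.
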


\begin{proof}[Proof of Lemma $6.13$]
We have from (6.9) and (6.10) that $\xi(i,j)_0^{(k)}\leq 0$ for all $i,j,k$, so~(6.12) implies that
\begin{equation}
\xi(i,j)^{(k)}_0 = 0\quad\text{for all $k$.}
\end{equation}
By (6.9) and (6.10) we have
\begin{equation}
\xi(i,j)^{(I)}_m\geq 0\quad\text{for $I\neq i$ and $m\geq 1$.}
\end{equation}
Since $\xi(i,j)\in L$, Equation (6.7) becomes
\begin{equation}
\sum_{m=0}^{N_I} \xi(i,j)^{(I)}_m = 0,
\end{equation}
so by (6.18) and (6.19) we get $\xi(i,j)^{(I)}_m = 0$ for all $I\neq i$ and all $m$.  This establishes~(6.14).  Equation (6.15) follows from (6.18).

Equations (6.9) and (6.10) imply that $\xi(i,j)^{(i)}_m\geq 0$ if both $m\geq 1$ and~$m\neq j$, which establishes (6.16).  Taking $I=i$ in (6.20) and using (6.18) gives
\begin{equation}
\sum_{m=1}^{N_i}\xi(i,j)^{(i)}_m = 0.
\end{equation}
If $\xi(i,j)_j^{(i)}\geq 0$, then (6.16) and (6.21) imply $\xi(i,j)_m^{(i)} = 0$ for all $m$.  Combined with (6.14) this says that $\xi(i,j)={\bf 0}$, contradicting our hypothesis.  We must therefore have $\xi(i,j)^{(i)}_j<0$, which establishes (6.17).  Equations (6.15) and (6.17) are inconsistent when $j=0$, so we must have $j\geq 1$.
\end{proof}

We draw the following conclusion from Lemma 6.13.
\begin{corollary}
If $\xi(i,j)\neq{\bf 0}$, then $\hat{\bf a}_j^{(i)}$ lies in the convex hull of the set
\[ \{ \hat{\bf a}_m^{(i)}\mid m=1,\dots,\hat{\jmath},\dots,N_i\}. \]
\end{corollary}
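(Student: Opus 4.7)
The plan is to exploit the relation $\xi(i,j)\in L$ together with the sign information from Lemma 6.13 to directly exhibit $\hat{\mathbf{a}}_j^{(i)}$ as a convex combination of the other $\hat{\mathbf{a}}_m^{(i)}$ with $m\geq 1$.

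First I would write out (6.6) for the element $\xi(i,j)\in L$. By (6.14), all coordinates with $k\neq i$ vanish, so the relation collapses to
\[
\sum_{m=0}^{N_i} \xi(i,j)^{(i)}_m \hat{\mathbf{a}}_m^{(i)} = \mathbf{0}.
\]
Using (6.15) to drop the $m=0$ term and then moving the (strictly negative, by (6.17)) $m=j$ term to the other side gives
\[
-\xi(i,j)^{(i)}_j \hat{\mathbf{a}}_j^{(i)} = \sum_{\substack{m=1\\ m\neq j}}^{N_i} \xi(i,j)^{(i)}_m \hat{\mathbf{a}}_m^{(i)},
\]
where the scalar on the left is a positive integer and the scalars on the right are nonnegative integers by (6.16).

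Next I would divide through by $-\xi(i,j)^{(i)}_j>0$ to obtain
\[
\hat{\mathbf{a}}_j^{(i)} = \sum_{\substack{m=1\\ m\neq j}}^{N_i} \frac{\xi(i,j)^{(i)}_m}{-\xi(i,j)^{(i)}_j}\, \hat{\mathbf{a}}_m^{(i)}.
\]
The coefficients on the right are nonnegative rationals, and to finish I need to verify that they sum to $1$. This is immediate from (6.21), which (together with (6.17)) gives $-\xi(i,j)^{(i)}_j = \sum_{m=1,\, m\neq j}^{N_i} \xi(i,j)^{(i)}_m$, so the coefficients sum to $1$ and the right-hand side is a genuine convex combination.

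There is no real obstacle here: the corollary is essentially a direct repackaging of Lemma 6.13 combined with the membership $\xi(i,j)\in L$. The only point requiring a moment of care is the verification that the coefficients sum to $1$, which rests on the homogeneity relation (6.7) applied in the index $I=i$ (namely (6.21)). Once this is noted, the convex-combination representation drops out cleanly.
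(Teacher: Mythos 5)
Your proposal is correct and follows essentially the same route as the paper: collapse the lattice relation for $\xi(i,j)$ using (6.14) and (6.15), solve for $\hat{\bf a}_j^{(i)}$ using (6.17), and check the coefficients are nonnegative and sum to $1$. The only cosmetic difference is that you invoke (6.21) for the normalization of the coefficients, whereas the paper re-derives that same identity (as its (6.24)) by reading off the $(n+i)$-th coordinate of the collapsed relation.
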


\begin{proof}[Proof of Corollary $6.22$]
Since $\xi(i,j)\in L$, we have the relation
\[ \sum_{k=1}^M \sum_{m=0}^{N_k} \xi(i,j)^{(k)}_m\hat{\bf a}_m^{(k)} = 0. \]
When $\xi(i,j)\neq{\bf 0}$, Equations (6.14) and (6.15) show that this simplifies to
\begin{equation}
\sum_{m=1}^{N_i} \xi(i,j)^{(i)}_m\hat{\bf a}_m^{(i)} = {\bf 0}.
\end{equation}
The $(n+i)$-th coefficient of $\hat{\bf a}_m^{(i)}$ is $1$, so (6.23) gives
\begin{equation}
\sum_{m=1}^{N_i} \xi(i,j)^{(i)}_m = 0.
\end{equation}
By (6.17) we can solve (6.23) for $\hat{\bf a}^{(i)}_j$:
\begin{equation} 
\hat{\bf a}^{(i)}_j = \sum_{\substack{m=1\\ m\neq j}}^{N_i} \bigg(-\frac{\xi(i,j)^{(i)}_m}{\xi(i,j)^{(i)}_j}\bigg) \hat{\bf a}_m^{(i)}.
\end{equation}
By (6.16), (6.17), and (6.24), the coefficients on the right-hand side of (6.25) are nonnegative and sum to $1$, so (6.25) implies that $\hat{\bf a}_j^{(i)}$ lies in the convex hull of 
\[ \{ \hat{\bf a}_m^{(i)}\mid m=1,\dots,\hat{\jmath},\dots,N_i\}. \]
\end{proof}

Let $A'\subseteq A$ be defined by
\[ A' = \{ \hat{\bf a}^{(k)}_m\mid \text{$\xi(i,j)^{(k)}_m\neq 0$ for some $\xi(i,j)$} \}. \]
Choose $\hat{\bf a}_{m_0}^{(k_0)}\in A'$ to be a vertex of the convex hull of $A'$.  In particular, this implies that $\hat{\bf a}_{m_0}^{(k_0)}$ does not lie in the convex hull of any subset of $A'$ not containing~$\hat{\bf a}_{m_0}^{(k_0)}$.  By Corollary~6.22 we then have $\xi(k_0,m_0) = {\bf 0}$, so $(i,j)\neq(k_0,m_0)$ for any pair $(i,j)$ such that $\xi(i,j)_{m_0}^{(k_0)}\neq 0$.  Lemma 6.13 now implies that $\xi(i,j)_{m_0}^{(k_0)}\geq 0$ for all
such~$i,j$.  Furthermore, $\xi(i,j)^{(k_0)}_{m_0}>0$ for some $i,j$ because $\hat{\bf a}_{m_0}^{(k_0)}\in A'$.  This implies that the $((k_0),m_0)$-coefficient on the right-hand side of (6.12) is $>0$, a contradiction, so there cannot be any $i,j$ such that $\xi(i,j)\neq{\bf 0}$.
\end{proof}

\begin{example}{\rm
Let $M=1$, $N_1=6$, and let $A=\{\hat{\bf a}_0^{(1)},\dots,\hat{\bf a}_6^{(1)}\}\subseteq{\mathbb Z}^5$ be the columns of the matrix
\[ \left( \begin{array}{rrrrrrr} 0 & 1 & 0 & -1 & 0 & 0 & 0 \\ 0 & 0 & 1 & -1 & 0 & 0 & 0 \\ 0 & 0 & 0 & 0 & 1 & 0 & -1 \\
0 & 0 & 0 & 0 & 0 & 1 & -1 \\ 1 & 1 & 1 & 1 & 1 & 1 & 1 \end{array} \right). \]
We take $\beta = (0,\dots,0,-1)\in{\mathbb Z}^5$ and $v=(-1,0,\dots,0)\in{\mathbb C}^7$.  Then ${\bf a}_0^{(1)} = (0,0,0,0)\in{\mathbb Z}^4$ is the unique interior lattice point in the convex hull of ${\bf a}_1^{(1)},\dots,{\bf a}_6^{(1)}$.  We have
\[ L = \{(-3l-3m,l,l,l,m,m,m)\in{\mathbb Z}^7\mid l,m\in{\mathbb Z}\} \]
and 
\[ L_v = \{(-3l-3m,l,l,l,m,m,m)\in{\mathbb Z}^7\mid l,m\in{\mathbb Z}_{\geq 0}\}. \]
We thus get the solution
\[ F(\lambda) = \lambda_0^{-1} \sum_{l,m=0}^\infty \frac{(3l+3m)!}{l!^3m!^3} \bigg(-\frac{\lambda_1\lambda_2\lambda_3}{\lambda_0^3}\bigg)^l\bigg(-\frac{\lambda_4\lambda_5\lambda_6}{\lambda_0^3}\bigg)^m. \]
By Proposition 6.2 the vector $v$ has minimal $\widehat{((1),j)}$-negative support for $j=0,\dots,6$ and we clearly have
\[ L_{v,\widehat{((1),0)}} = \cdots = L_{v,\widehat{((1),6)}} = L_v. \]
Theorem 4.11 gives us the quasisolutions $F(\lambda)\log\lambda_j^{(1)} + G_j^{(1)}(\lambda)$ for $j=0,\dots,6$, where
\[ G_0^{(1)}(\lambda) = -\lambda_0^{-1} \sum_{\substack{l,m=0\\ (l,m)\neq(0,0)}}^\infty \frac{(3l+3m)!}{l!^3m!^3} \bigg(\sum_{i=1}^{3l+3m} \frac{1}{i}\bigg)\bigg(-\frac{\lambda_1\lambda_2\lambda_3}{\lambda_0^3}\bigg)^l\bigg(-\frac{\lambda_4\lambda_5\lambda_6}{\lambda_0^3}\bigg)^m, \]
\[ G_j^{(1)}(\lambda) = -\lambda_0^{-1} \sum_{l=1}^\infty\sum_{m=0}^\infty \frac{(3l+3m)!}{l!^3m!^3} \bigg(\sum_{i=1}^{l} \frac{1}{i}\bigg)\bigg(-\frac{\lambda_1\lambda_2\lambda_3}{\lambda_0^3}\bigg)^l\bigg(-\frac{\lambda_4\lambda_5\lambda_6}{\lambda_0^3}\bigg)^m \]
for $j=1,2,3$, and 
\[ G_j^{(1)}(\lambda) = -\lambda_0^{-1} \sum_{l=0}^\infty\sum_{m=1}^\infty \frac{(3l+3m)!}{l!^3m!^3} \bigg(\sum_{i=1}^{m} \frac{1}{i}\bigg)\bigg(-\frac{\lambda_1\lambda_2\lambda_3}{\lambda_0^3}\bigg)^l\bigg(-\frac{\lambda_4\lambda_5\lambda_6}{\lambda_0^3}\bigg)^m \]
for $j=4,5,6$.  The integrality of the series $\exp\big(G_j^{(1)}(\lambda)/F(\lambda)\big)$ for $j=0,\dots,6$ asserted by Conjecture 6.3 is due in this case to Krattenthaler-Rivoal\cite{KR}.
}
\end{example}

\begin{example} {\rm
Let $M=1$, $N_1=4$, and let $A=\{\hat{\bf a}_0^{(1)},\dots,\hat{\bf a}_4^{(1)}\}\subseteq{\mathbb Z}^3$ be the columns of the matrix
\[ \left( \begin{array}{rrrrr} 0 & 1 & -1 & 1 & 1 \\ 0 & 1 & 0 & -1 & 0 \\ 1 & 1 & 1 & 1 & 1 \end{array} \right). \]
Take $\beta = (0,0,-1)\in{\mathbb Z}^3$ and $v=(-1,0,0,0,0)\in{\mathbb C}^5$.  Then ${\bf a}_0^{(1)}=(0,0)\in{\mathbb Z}^2$ is the unique interior lattice point in the convex hull of ${\bf a}_1^{(1)},\dots,{\bf a}_4^{(1)}$.  We have
\[ L = \{(-4l-2m,l,2l+m,l,m)\in{\mathbb Z}^5\mid l,m\in{\mathbb Z}\} \]
\and 
\[ L_v = \{(-4l-2m,l,2l+m,l,m)\in{\mathbb Z}^5\mid l,m\in{\mathbb Z}_{\geq 0} \}. \]
This gives the solution
\[ F(\lambda) = \lambda_0^{-1} \sum_{l=0}^\infty\sum_{m=0}^\infty \frac{(4l+2m)!}{l!^2 m! (2l+m)!} \bigg(
\frac{\lambda_1\lambda_2^2\lambda_3}{\lambda_0^4}\bigg)^l \bigg(\frac{\lambda_2\lambda_4}{\lambda_0^2}\bigg)^m. \]
By Proposition 6.2 the vector $v$ has minimal $\widehat{((1),j)}$-negative support for $j=0,\dots,4$.  We have
\[ L_{v,\widehat{((1),0)}} = \dots = L_{v,\widehat{((1),3)}} = L_v \]
but 
\[ L_{v,\widehat{((1),4)}} = \{(-4l-2m,l,2l+m,l,m)\in{\mathbb Z}^5\mid l,2l+m\in{\mathbb Z}_{\geq 0} \}. \]
Theorem 4.11 gives the quasisolutions $F(\lambda)\log\lambda_j^{(1)} + G_j^{(1)}(\lambda)$ for $j=0,\dots,4$, where
\[ G_0^{(1)}(\lambda) = -\lambda_0^{-1}\sum_{\substack{l,m = 0\\ (l,m)\neq (0,0)}}^\infty \frac{(4l+2m)!}{l!^2 m! (2l+m)!} \bigg(\sum_{i=1}^{4l+2m} \frac{1}{i}\bigg)\bigg(\frac{\lambda_1\lambda_2^2\lambda_3}{\lambda_0^4}\bigg)^l \bigg(\frac{\lambda_2\lambda_4}{\lambda_0^2}\bigg)^m, \]
\[ G_1^{(1)}(\lambda) = G_3^{(1)}(\lambda) =  -\lambda_0^{-1}\sum_{l=1}^\infty\sum_{m=0}^\infty  \frac{(4l+2m)!}{l!^2 m! (2l+m)!} \bigg(\sum_{i=1}^l \frac{1}{i}\bigg)\bigg(\frac{\lambda_1\lambda_2^2\lambda_3}{\lambda_0^4}\bigg)^l \bigg(\frac{\lambda_2\lambda_4}{\lambda_0^2}\bigg)^m, \]
\[ G_2^{(1)}(\lambda) =  -\lambda_0^{-1}\sum_{\substack{l,m = 0\\ (l,m)\neq (0,0)}}^\infty \frac{(4l+2m)!}{l!^2 m! (2l+m)!} \bigg(\sum_{i=1}^{2l+m} \frac{1}{i}\bigg)\bigg(\frac{\lambda_1\lambda_2^2\lambda_3}{\lambda_0^4}\bigg)^l \bigg(\frac{\lambda_2\lambda_4}{\lambda_0^2}\bigg)^m, \]
\begin{multline*}
G_4^{(1)}(\lambda) =  -\lambda_0^{-1}\sum_{l=0}^\infty\sum_{m=1}^\infty  \frac{(4l+2m)!}{l!^2 m! (2l+m)!} \bigg(\sum_{i=1}^{m} \frac{1}{i}\bigg)\bigg(\frac{\lambda_1\lambda_2^2\lambda_3}{\lambda_0^4}\bigg)^l \bigg(\frac{\lambda_2\lambda_4}{\lambda_0^2}\bigg)^m \\ 
-\lambda_0^{-1} \sum_{l=0}^\infty \sum_{m=-2l}^{-1} \frac{(4l+2m)!(-1)^{-m}(-m-1)!}{l!^2  (2l+m)!} \bigg(\frac{\lambda_1\lambda_2^2\lambda_3}{\lambda_0^4}\bigg)^l \bigg(\frac{\lambda_2\lambda_4}{\lambda_0^2}\bigg)^m. 
\end{multline*}
The integrality of the series $\exp\big(G^{(1)}_j(\lambda)/F(\lambda)\big)$ for $j=0,\dots,3$ asserted by Conjecture~6.3 may follow from the work of Delaygue\cite{D}, however, the integrality for $j=4$ seems to be an open question.
}
\end{example}

\end{document}